\theoremstyle{plain}
\newtheorem{theorem}{Theorem}
\newtheorem{proposition}{Proposition}[section]
\theoremstyle{definition}
\newtheorem{definition}[proposition]{Definition}
\newtheorem{example}[proposition]{Example}
\newtheorem{remark}[proposition]{Remark}
\newcommand{\abs}[1]{\left|#1\right|}              
\title[A Bijection between Strongly Stable and Totally Symmetric Partitions]{A Bijection Between Strongly Stable and Totally Symmetric Partitions}
\author{Seth Ireland}
\address{Dept.~of Mathematics, Colorado State University, Fort Collins, CO, USA}
\email{seth.ireland@colostate.edu}
\thanks{\textit{Keywords}: Generic Initial Ideal, Borel Group, Borel Ideal, Symmetric Monomial Ideal, Strongly Stable Ideal, Strongly Stable Partition, Totally Symmetric Plane Partition, Plane Partition, Enumeration}  
\thanks{\textit{MSC classification}:
Primary:
05A17; 
Secondary:
13F55} 
\begin{document}

\begin{abstract}
Artinian monomial ideals in $d$ variables correspond to $d$-dimensional partitions. We define $d$-dimensional strongly stable partitions and show that they correspond to strongly stable ideals in $d$ variables. We then show a bijection between strongly stable partitions and totally symmetric partitions which preserves the side length of the minimal bounding box. 
\end{abstract}

\maketitle

\section{Introduction}

The subgroup of $GL_d(K)$ consisting of upper triangular matrices is called the Borel group. Ideals which are fixed by the Borel group are monomial ideals called Borel-fixed ideals. Generic initial ideals are always Borel-fixed, and in characteristic zero, ideals are Borel-fixed if and only if they are strongly stable \cite{herzog2011monomial}. So the generic initial ideals over a field of characteristic zero are exactly the strongly stable ideals \cite{francisco2011borel}.

Artinian monomial ideals in $K[x_1,\dots,x_d]$ correspond naturally to $d$-dimensional partitions by considering the monomials which are not in the ideal. We can think of these as $d$-dimensional stacks of blocks in a ($d$-dimensional) corner. Totally symmetric partitions are $d$-dimensional partitions which are fixed by the symmetric group $S_d$.

In this paper, we define strongly stable partitions (Definition 2.5), show that they naturally correspond to strongly stable ideals (Proposition 4.5), and finally show a bijection with totally symmetric partitions which preserves the side length $n$ of the smallest box containing the partitions (Theorem 1).

\section{Partitions}

Let $d$ be a positive integer and let $\mathbb{N}=\{0,1,2,3,\dots\}$.

\begin{definition}
A \textit{$d$-dimensional partition} is a finite subset $P\subset\mathbb{N}^d$ such that 
\begin{equation*}
(a_1,a_2,\dots,a_j,\dots,a_d)\in P\implies (a_1,a_2,\dots,a_j-1,\dots,a_d)\in P
\end{equation*}
for all $1\leq j\leq d$ if $a_j>0$. Refer to the elements of a partition as \textit{cells}.
\end{definition}

Denote the set of $d$-dimensional partitions with largest coordinate of any cell equal to $n-1$ by $\mathcal{P}_d(n)$. We think of these as $d$-dimensional partitions which fit inside a $d$-dimensional box of side length $n$ and also touch at least one edge of the box. Note that the "boxes" we refer to are $n\times\cdots\times n$ boxes of dimension $d$ (every side of the box is the same length). Integer partitions correspond exactly with the 2-dimensional partitions.

\begin{example}
For example, consider the integer partition 7=3+2+1+1.
We can represent the integer partition with the Ferrers diagram below,
\begin{center}
\begin{ytableau}
    \none & \\
    \none & & \\
    \none & & & &
\end{ytableau}
\end{center}
which depicts the 2-dimensional partition
\begin{equation*}
P=\{(0,0),(0,1),(0,2),(1,0),(1,1),(2,0),(3,0)\}\in\mathcal{P}_2(4).
\end{equation*}
\end{example}

\begin{example}
A 3-dimensional partition is commonly called a \textit{plane partition}. An example of a plane partition of 10 is 
\begin{equation*}
P=\{(0,0,0),(0,0,1),(0,1,0),(0,1,1),(1,1,0),(1,0,1),(1,2,0),(0,2,0),(2,0,0)\}\in\mathcal{P}_3(3)
\end{equation*}
which we can visualize with the diagram below
\begin{center}
\includegraphics[scale=0.3]{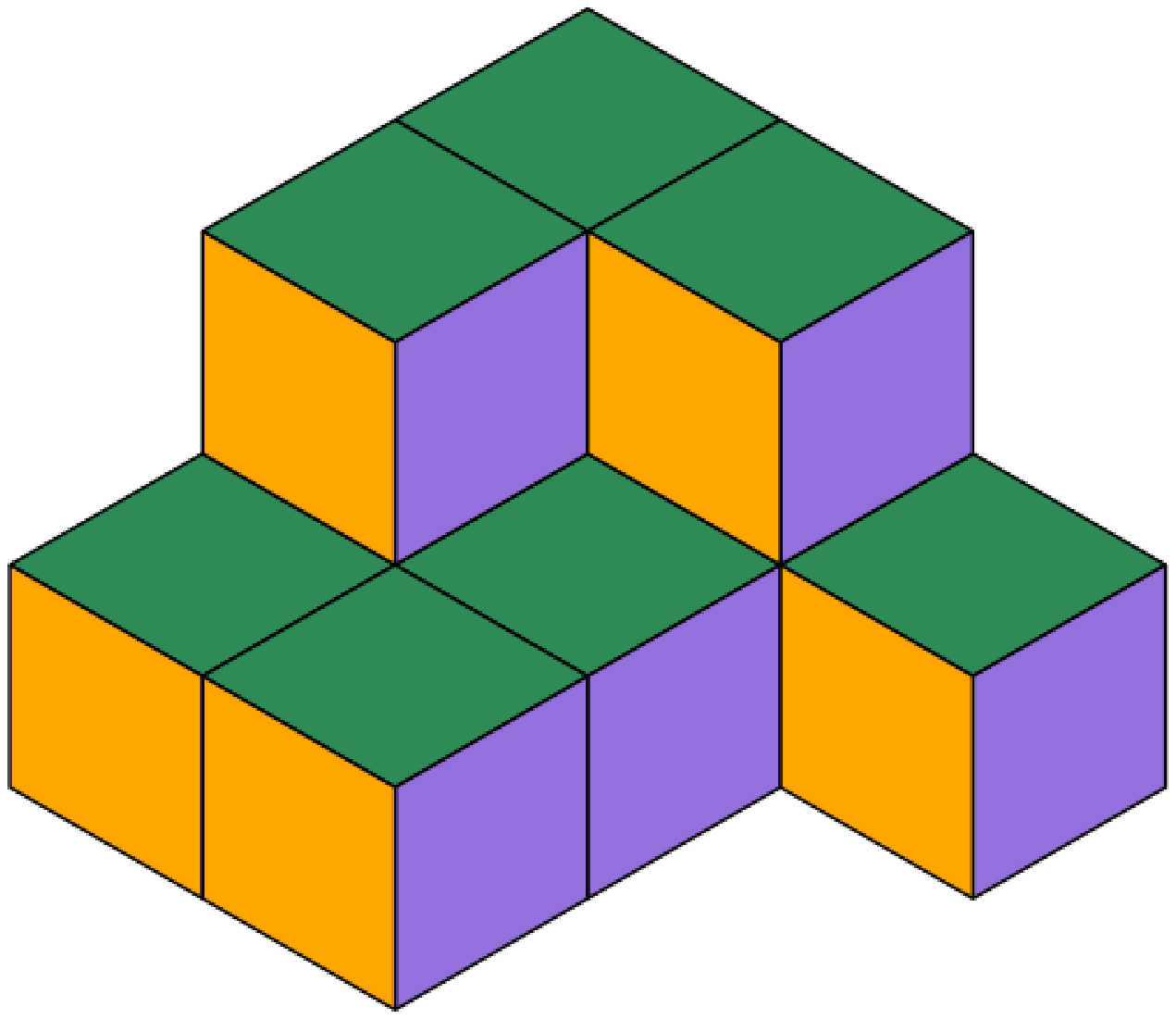}
\end{center}
Plane partitions are also commonly represented in matrix notation. In matrix notation, the partition above could be written
\begin{equation*}
\begin{matrix}
2 & 2 & 1\\
2 & 1\\
1 & 1
\end{matrix}
\end{equation*}
\end{example}

\begin{definition}
Let $P$ be a $d$-dimensional partition. For a cell $\alpha=(a_1,\dots,a_d)\in P$, define the \textit{$j$th arm length} to be the largest integer $h_j$ such that $\alpha=(a_1,a_2,\dots,a_j+h_j,\dots,a_d)\in P$. Denote the vector of arm lengths by $H(\alpha)=(h_1,\dots,h_d)$ and call this the \textit{Hook vector} of $\alpha$.
\end{definition}

\begin{definition}
A \textit{strongly stable partition} is a partition for which every cell's Hook vector is weakly increasing.
\end{definition}

We will denote the set of strongly stable partitions which fit inside a box of side length $n$ by
\begin{equation*}
\tilde{\mathcal{B}}_d(n):=\{P\mid P\in\mathcal{P}_d(n)\text{ and $P$ is strongly stable}\}
\end{equation*}

\begin{example}
An example of a 2-dimensional strongly stable partition $P\in\tilde{\mathcal{B}}_2(7)$ is given below with Hook vectors given inside each cell.
\begin{center}
\begin{ytableau}
    \none & \text{\tiny 0,0} \\
    \none & \text{\tiny 0,1}\\
    \none & \text{\tiny 0,2}\\
    \none & \text{\tiny 1,3} &\text{\tiny 0,0} \\
    \none & \text{\tiny 2,4} & \text{\tiny 1,1} & \text{\tiny 0,0}\\
    \none & \text{\tiny 2,5} & \text{\tiny 1,2} & \text{\tiny 0,1}\\
    \none & \text{\tiny 3,6} & \text{\tiny 2,3} & \text{\tiny 1,2} & \text{\tiny 0,0}
\end{ytableau}
\end{center}
Note that 2-dimensional strongly stable partitions are exactly the integer partitions with no repeats. These are commonly called \textit{strict} partitions.
\end{example}

\begin{example}
An example of a 3-dimensional strongly stable partition $P\in\tilde{\mathcal{B}}_3(4)$ is given below. One can verify that the Hook vectors of each cell are weakly increasing.
\begin{center}
\includegraphics[scale=0.4]{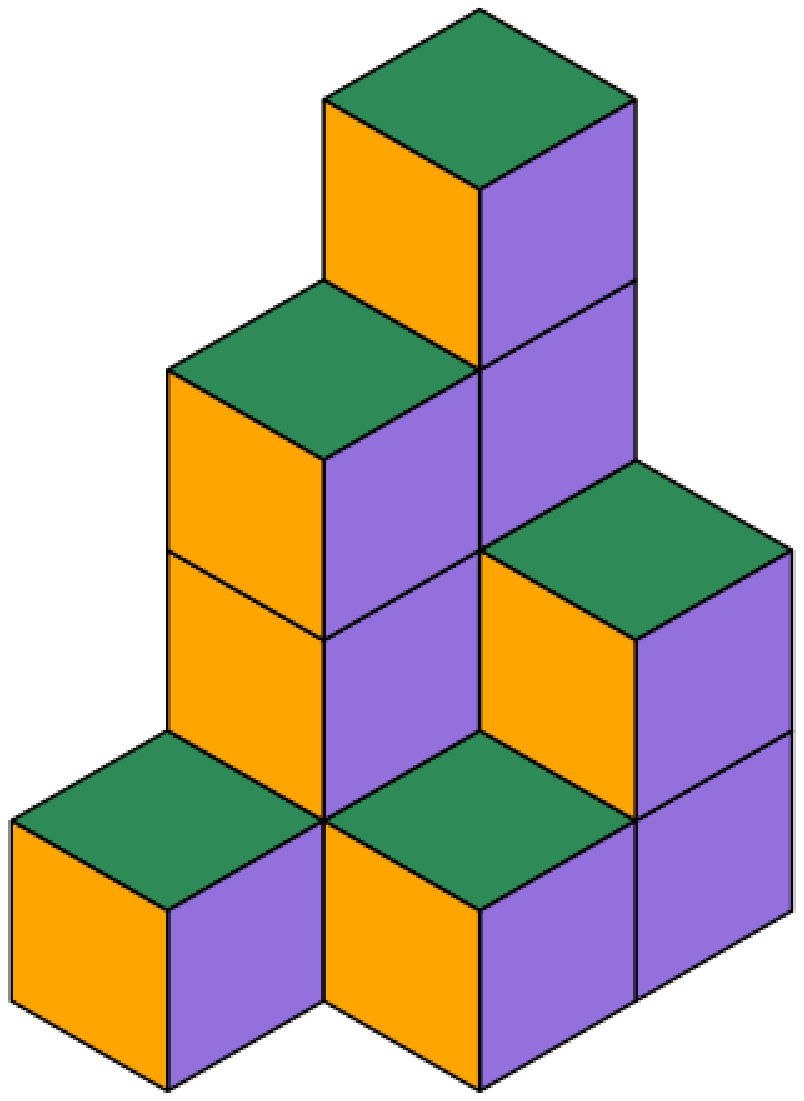}
\end{center}
\end{example}

\begin{definition}
A \textit{totally symmetric partition} is a partition for which $\alpha\in P\implies\pi(\alpha)\in P$ for all $\pi\in S_d$.
\end{definition}

We denote the set of totally symmetric partitions which fit inside a box of side length $n$ by \begin{equation*}
\tilde{\mathcal{T}}_d(n):=\{P\mid P\in\mathcal{P}_d(n)\text{ and $P$ is totally symmetric}\}
\end{equation*}

\begin{example}
An example of a 2-dimensional totally symmetric partition $P\in\tilde{\mathcal{T}}_2(7)$ is given below.
\begin{center}
\begin{ytableau}
    \none & \\
    \none & \\
    \none & & &\\
    \none & & & & \\
    \none & & & & & \\
    \none & & & & &\\
    \none & & & & & & &
\end{ytableau}
\end{center}
Note that 2-dimensional totally symmetric partitions are commonly called \textit{self-conjugate} partitions.
\end{example}

\begin{example}
An example of a 3-dimensional totally symmetric partition (totally symmetric plane partition) $P\in\tilde{\mathcal{T}}_3(4)$ is shown below.
\begin{center}
\includegraphics[scale=0.5]{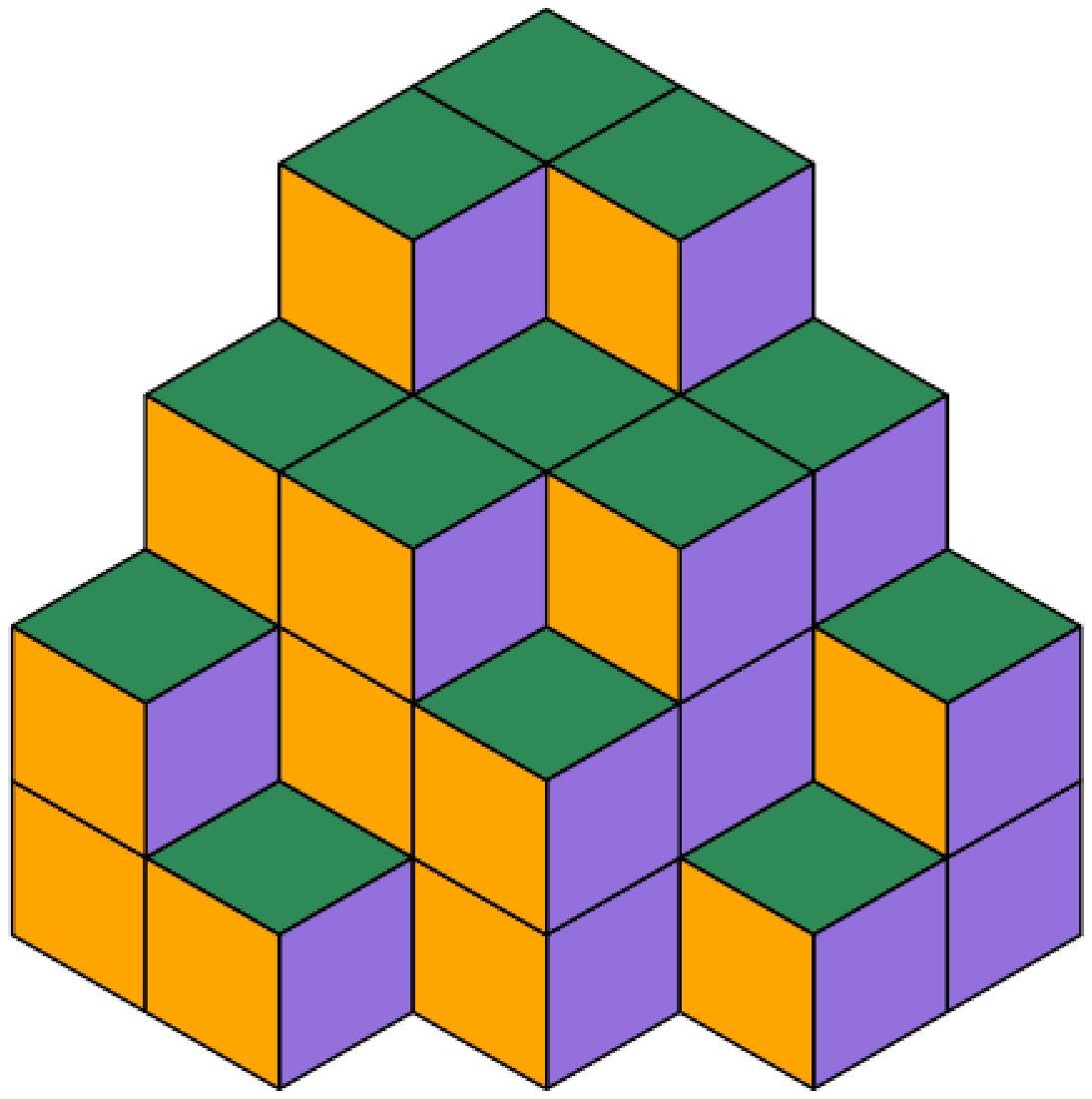}
\end{center}
\end{example}

\section{Monomial Ideals}
Fix a positive integer $n$ and let $K$ be a field of characteristic zero. We will often refer to \textit{monomials} by their multi-exponent notation $\textbf{x}^{\alpha}:=x_1^{a_1}\cdots x_d^{a_d}\in K[x_1,\dots,x_d]$. In this notation, let $deg(\alpha)=\sum_{i=1}^da_i$. A \textit{monomial ideal} is an ideal $I\subset K[x_1,\dots,x_d]$ which is generated by monomials. Every monomial ideal $I\subset K[x_1,\dots,x_d]$ has a unique minimal subset of monomial generators $G(I)$.

Elements $(a_{i,j})\in GL_d(K)$ induce a linear automorphism on $K[x_1,\dots,x_d]$
\begin{equation*}
f(x_1,\dots,x_d)\mapsto f\big(\sum_{i=1}^d a_{i,1}x_i,\dots,\sum_{i=1}^d a_{i,d}x_i\big)
\end{equation*}

The \textit{Borel group} is the subgroup of $GL_d(K)$ consisting of upper triangular matrices. The ideals which are fixed by the action of the Borel group on the variables $x_1,\dots,x_d$ are called \textit{Borel-fixed ideals}.

\begin{definition}
Let $I\subset K[x_1,\dots,x_d]$ be a monomial ideal. If $m\frac{x_i}{x_j}\in I$ for all $m\in I$ with $x_j\mid m$ and $i<j$, then $I$ is called \textit{strongly stable}.
\end{definition}

We will refer to the defining property of strongly stable ideals as the \textit{variable exchange condition}.

\begin{proposition}
Let $I$ be a monomial ideal. If $char(K)=0$, then $I$ is strongly stable iff $I$ is Borel-fixed.
\end{proposition}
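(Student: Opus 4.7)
The plan is to reduce the Borel action to two building blocks and match each to the combinatorial defining condition of strongly stable.

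First I would record the structural fact that the Borel group $B\subset GL_d(K)$ is generated by the diagonal torus $T$ together with the elementary upper triangular unipotents $E_{ij}(t):=I+tE_{ij}$ for $i<j$, $t\in K$. Under the action defined in the excerpt, a diagonal matrix scales each variable and therefore fixes every monomial ideal setwise, so torus-invariance is automatic. The content of the statement is thus the equivalence between strongly-stable-ness of $I$ and invariance under every $E_{ij}(t)$. The transformation $E_{ij}(t)$ sends $x_j\mapsto x_j+tx_i$ and fixes the remaining variables, so for a monomial $m=\mathbf{x}^\alpha$ with $a_j=\deg_{x_j}m$ one computes, by the binomial theorem,
\begin{equation*}
E_{ij}(t)(m)=\sum_{k=0}^{a_j}\binom{a_j}{k}t^{k}\bigl(x_i/x_j\bigr)^{k}m.
\end{equation*}
Every term on the right is a genuine monomial since $k\le a_j$.

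For the forward direction, assume $I$ is strongly stable and let $m\in I$ have $x_j\mid m$. Iterating the variable exchange condition $k$ times gives $(x_i/x_j)^k m\in I$ for every $0\le k\le a_j$, so every coefficient in the display above lies in $I$, hence $E_{ij}(t)(m)\in I$ for all $t$. As this holds for each minimal generator, the ideal $E_{ij}(t)(I)$ is contained in $I$; applying the same reasoning to $E_{ij}(-t)$ yields equality, so $I$ is $B$-fixed.

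For the converse I would exploit characteristic zero through a coefficient-extraction argument. Assume $I$ is Borel-fixed; take any monomial $m\in I$ with $x_j\mid m$ and any $i<j$. For every $t\in K$ the element $E_{ij}(t)(m)$ lies in $I$. Since $K$ is infinite (being of characteristic zero) and $I$ is a $K$-linear subspace of $K[x_1,\dots,x_d]$, a standard Vandermonde argument on finitely many choices of $t$ shows that each coefficient of the polynomial in $t$ displayed above must individually lie in $I$. The $t^{1}$-coefficient equals $a_j\bigl(x_i/x_j\bigr)m$, and because $a_j\ge 1$ is invertible in $K$ (here is the only place the characteristic hypothesis is used), we obtain $(x_i/x_j)m\in I$, which is exactly the variable exchange condition. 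The main obstacle is this last step: one must carefully verify that $B$-invariance really yields coefficient-wise membership rather than just membership of the sum, and one must point out precisely where characteristic zero enters, namely in inverting the binomial coefficient $a_j$ (and implicitly in providing enough distinct scalars to run the Vandermonde separation).
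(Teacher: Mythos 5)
The paper gives no proof of this proposition, deferring to the cited reference; your argument is correct and is essentially the standard proof found there: reduce Borel-invariance to the diagonal torus and the elementary unipotents $E_{ij}(t)$, expand $E_{ij}(t)(m)$ by the binomial theorem, and in the converse extract the $t^1$-coefficient, with characteristic zero entering exactly where you say, in the nonvanishing of $\binom{a_j}{1}=a_j$. One small simplification worth noting: since $I$ is a \emph{monomial} ideal, a polynomial lies in $I$ if and only if each of its monomial terms does, so setting $t=1$ already gives coefficientwise membership and the Vandermonde separation step can be skipped.
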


Note that we are implicitly using the variable order $x_1>x_2>\cdots >x_d$. By shuffling this order, we would get different (but equivalent) sets of strongly stable ideals. The next proposition shows that we can check if $I$ is strongly stable by checking the variable exchange condition on generators of $I$.

\begin{proposition}
Let $I\subset K[x_1,\dots,x_d]$ be a monomial ideal. Suppose that for all $m\in G(I)$ and all integers $1\leq i<j\leq d$ such that $x_j\mid m$, we have $m\frac{x_i}{x_j}\in I$. Then $I$ is strongly stable.
\end{proposition}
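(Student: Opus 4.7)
My plan is to reduce the variable exchange condition on arbitrary elements of $I$ to the assumed condition on the minimal generating set $G(I)$, by a simple case analysis on whether $x_j$ divides the generator or the cofactor.

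Specifically, I would take an arbitrary monomial $m \in I$ with $x_j \mid m$ and some $i < j$, and write $m = u \cdot g$ for some $g \in G(I)$ and some monomial $u$; this is possible because $G(I)$ generates $I$. The goal is to show $m \tfrac{x_i}{x_j} \in I$, and the key observation is that since $x_j \mid m = u \cdot g$, at least one of $u$ or $g$ must be divisible by $x_j$. I would then split into two cases.

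In the first case, suppose $x_j \mid g$. By hypothesis, $g \tfrac{x_i}{x_j} \in I$, and then $m \tfrac{x_i}{x_j} = u \cdot \bigl(g \tfrac{x_i}{x_j}\bigr)$ is a monomial multiple of an element of $I$, hence lies in $I$. In the second case, suppose $x_j \nmid g$; then necessarily $x_j \mid u$, so $u \tfrac{x_i}{x_j}$ is a genuine monomial, and $m \tfrac{x_i}{x_j} = \bigl(u \tfrac{x_i}{x_j}\bigr) \cdot g$ lies in $I$ since $g \in I$. Either way, $I$ satisfies the variable exchange condition on all of its elements, so $I$ is strongly stable.

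There is no real obstacle here; the argument is essentially bookkeeping, and the only subtlety is noting that the ``fraction'' $m \tfrac{x_i}{x_j}$ must be shown to be an honest polynomial in each case, which is exactly what the case split guarantees. I would present the proof in roughly one short paragraph, with the two cases displayed inline.
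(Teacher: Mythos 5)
Your argument is correct and complete: every monomial of $I$ factors as $u\cdot g$ with $g\in G(I)$, the divisibility $x_j\mid u\cdot g$ forces $x_j$ to divide $u$ or $g$, and each case lands $m\frac{x_i}{x_j}$ back in $I$. The paper itself offers no proof of this proposition, deferring instead to Herzog and Hibi; your one-paragraph case analysis is essentially the standard argument found there, so nothing is missing and no different route is taken.
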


See \cite{herzog2011monomial} for proofs of the previous two propositions.

\begin{example}
The ideal $I=(x^4,x^3y,x^2y^3,xy^4,y^7)$ is strongly stable, because $x^3y\frac{x}{y}=x^4\in I$, $x^2y^3\frac{x}{y}=x^3y^2\in I$, $xy^4\frac{x}{y}=x^2y^3\in I$, and $y^7\frac{x}{y}=xy^6\in I$.
\end{example}

Given a set of monomials $A=\{\textbf{x}^{\alpha_1},\dots,\textbf{x}^{\alpha_r}\}$, we can consider the minimal subset of $A$ which generates the same ideal. We will use the notation $min(A)$ to refer to this minimal subset.

\begin{definition}
Let $m\in K[x_1,\dots,x_d]$ be a monomial. A \textit{Borel move} is an operation that sends $m$ to a monomial $m\frac{x_{i_1}}{x_{j_1}}\cdots\frac{x_{i_s}}{x_{j_s}}$, where $i_t<j_t$ and $m$ is divisible by $x_{j_t}$ for all $t$.
\end{definition}

\begin{definition}
A monomial ideal $I\subset K[x_1,\dots,x_d]$ is a \textit{strongly stable ideal} if it is closed under Borel moves.
\end{definition}

\begin{definition}
Let $A$ be a subset of monomials. Define $Borel(A)$ to be the smallest strongly stable ideal containing $A$. We call the monomials in $A$ \textit{Borel generators} of $Borel(A)$.
\end{definition}

\begin{proposition}
Every strongly stable ideal $I$ has a unique minimal set of Borel generators. Refer to this set as $Bgens(I)$.
\end{proposition}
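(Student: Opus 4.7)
The plan is to introduce a partial order $\preceq$ on monomials by declaring $m\preceq m'$ iff $m'\in Borel(\{m\})$, and then to identify $Bgens(I)$ with the set of $\preceq$-minimal elements of $I$. Since $Borel(\{m\})$ can be described as the set of monomials of the form $q\cdot\tilde m$, where $\tilde m$ is obtained from $m$ by some sequence of Borel swaps and $q$ is an arbitrary monomial, reflexivity and transitivity of $\preceq$ will be immediate. For antisymmetry, if $m\preceq m'$ and $m'\preceq m$, then since Borel swaps preserve total degree, degree considerations force $\deg(m)=\deg(m')$ and the auxiliary multipliers to be trivial; hence $m'$ is a pure Borel-swap image of $m$ and conversely. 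But each single swap $m\mapsto m\cdot x_i/x_j$ with $i<j$ strictly increases the lex order (with $x_1>\cdots>x_d$), so any nontrivial composition of swaps does too, and $m=m'$ follows.

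With $\preceq$ in place, I would define $Bgens(I)\defeq\{m\in I : m \text{ is }\preceq\text{-minimal in }I\}$ and prove two claims: (i) $Borel(Bgens(I))=I$, and (ii) every set $A$ with $Borel(A)=I$ contains $Bgens(I)$. For (i), the containment $Borel(Bgens(I))\subseteq I$ is clear from $Bgens(I)\subseteq I$ and $I$ being strongly stable. For the reverse, I need every $m\in I$ to dominate some $m^*\in Bgens(I)$ under $\preceq$, which requires a descending chain condition on $(I,\preceq)$. I would prove this as follows: along any strict descent $m_0\succ m_1\succ\cdots$, the degrees are weakly decreasing and hence eventually stabilize; from that point on, consecutive steps are pure reversed Borel swaps, which strictly \emph{decrease} the lex order, and since only finitely many monomials have any given degree, the chain must terminate. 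Given any $m\in I$, a minimal element of $\{m'\in I : m'\preceq m\}$ then supplies the required $m^*\in Bgens(I)$ with $m\in Borel(\{m^*\})$.

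For (ii), given any $A$ with $Borel(A)=I$, each $m\in Bgens(I)\subseteq I=Borel(A)$ satisfies $m'\preceq m$ for some $m'\in A$ (using the union description of $Borel(A)=\bigcup_{m'\in A}Borel(\{m'\})$ as a set of monomials); the $\preceq$-minimality of $m$ then forces $m'=m$, so $m\in A$. Combining (i) and (ii), $Bgens(I)$ generates $I$ as a Borel ideal and sits inside every set that does, so it is simultaneously the unique minimum and the unique minimal set of Borel generators.

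The main obstacle is establishing the descending chain condition, which is closely tied to the antisymmetry of $\preceq$. Both rest on the single observation that one Borel swap strictly increases the lex order while preserving total degree; once that observation is in hand, everything else is a formal exercise in order theory.
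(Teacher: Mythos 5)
Your argument is correct, but note that the paper does not actually prove this proposition: it attributes Propositions 3.8 and 3.9 to Francisco--Mermin--Schweig and cites them, so your proposal supplies a self-contained proof where the paper offers only a reference. Your route --- ordering monomials by $m\preceq m'$ iff $m'\in Borel(\{m\})$, proving antisymmetry and the descending chain condition from the single observation that a swap $m\mapsto m\,x_i/x_j$ ($i<j$) preserves degree and strictly raises the lex order, and then taking $Bgens(I)$ to be the $\preceq$-minimal elements of $I$ --- is essentially the standard ``Borel order'' argument from the cited source, and it is sound. The one step you assert rather than verify is the monomial-level description $Borel(\{m\})=\{q\tilde m\}$ with $\tilde m$ a swap image of $m$ and $q$ arbitrary: you should check that this set really is closed under swaps (a swap of $q\tilde m$ by $x_i/x_j$ either acts on the $q$ factor, giving $(qx_i/x_j)\tilde m$, or on the $\tilde m$ factor, giving a further swap image of $m$), since both the union formula $Borel(A)=\bigcup_{m'\in A}Borel(\{m'\})$ and your antisymmetry/DCC arguments lean on it. It is also worth observing that your characterization of $Bgens(I)$ as the $\preceq$-minimal monomials of $I$ is exactly what makes the paper's Proposition 3.9 (the divisibility and swap test for membership in $Bgens(I)$) work, so your proof dovetails with how $Bgens$ is used later in Section 5.
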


\begin{proposition}
Suppose $I$ is a strongly stable ideal and $m\in I$. Then $m\in Bgens(I)$ iff for all $x_q$ dividing $m$, $\frac{m}{x_q}\notin I$ and $m\frac{x_{q+1}}{x_q}\notin I$.
\end{proposition}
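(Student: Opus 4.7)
The plan is to prove both directions by contrapositive, using the uniqueness and minimality of $Bgens(I)$ established in the previous proposition.

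For the forward direction, suppose some $x_q$ dividing $m$ violates a condition. If $m/x_q \in I$, then $m$ is obtained from $m/x_q$ by one multiplication by $x_q$; if $m \cdot x_{q+1}/x_q \in I$, then $m$ is obtained from $m \cdot x_{q+1}/x_q$ by one Borel move swapping $x_{q+1}$ for $x_q$ (valid since $x_{q+1}$ divides that monomial). Either way there is $m' \in I$ with $m' \neq m$ and $m \in Borel(\{m'\})$. Writing $m' \in Borel(\{m''\})$ for some $m'' \in Bgens(I)$, I would rule out $m'' = m$ as follows: if $m'' = m$, then $m' \in Borel(\{m\})$, and combined with $m \in Borel(\{m'\})$, the facts that a Borel move strictly raises a monomial in the dominance order while preserving its degree and that a multiplication strictly raises its degree would force $m = m'$, contradicting $m' \neq m$. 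Thus $m'' \in Bgens(I) \setminus \{m\}$, giving $m \in Borel(\{m''\}) \subseteq Borel(Bgens(I) \setminus \{m\})$ and contradicting minimality.

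For the reverse direction, suppose $m \in I$ satisfies both conditions for every $x_q \mid m$ but $m \notin Bgens(I)$. Fix a derivation chain $m_0 \to m_1 \to \cdots \to m_k = m$ with $m_0 \in Bgens(I)$; since $m \notin Bgens(I)$ the length $k$ is at least $1$, and each $m_i$ lies in $I$. If the final step $m_{k-1} \to m_k$ is a multiplication by $x_q$, then $m_{k-1} = m/x_q \in I$ violates the first condition. If it is a Borel move replacing $x_j$ by $x_i$ with $i < j$, then $m_{k-1} = m \cdot x_j/x_i \in I$ and $x_i \mid m$. The subcase $j = i+1$ directly contradicts the second condition with $q = i$; for $j > i+1$, I would apply the variable exchange condition to $m_{k-1}$ (which is divisible by $x_j$) using $i+1 < j$ to obtain $m_{k-1} \cdot x_{i+1}/x_j = m \cdot x_{i+1}/x_i \in I$, again contradicting the second condition at $q = i$.

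The main obstacle is the possibly non-adjacent Borel move at the tail of the derivation chain in the reverse direction, since the hypotheses only forbid adjacent swaps $x_q \leftrightarrow x_{q+1}$. The key resolution is that $I$ is itself strongly stable, so one further application of the variable exchange condition inside $I$ collapses any $x_i \leftrightarrow x_j$ swap down to the adjacent case.
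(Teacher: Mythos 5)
The paper offers no proof of this proposition at all: it is quoted from Francisco, Mermin, and Schweig \cite{francisco2011borel}, so your argument has to stand on its own, and it does. Both directions are sound. In the forward direction, the step of ruling out $m''=m$ is correct for the reason you give: an elementary Borel move preserves degree and strictly increases the partial-sum vector $(a_1,\,a_1+a_2,\,\dots)$ componentwise, while multiplication by a variable strictly increases degree, so $m\in Borel(\{m'\})$ together with $m'\in Borel(\{m\})$ forces $m=m'$. Two points are left implicit but are immediate from the definitions: first, that $Borel(A)$ coincides with the closure of $A$ under elementary exchanges and multiplications by variables (this is what licenses both the decomposition $I=\bigcup_{w\in Bgens(I)}Borel(\{w\})$ and the existence of your derivation chains); second, that the minimality contradiction at the end needs the observation that $m\in Borel(Bgens(I)\setminus\{m\})$ implies $Borel(Bgens(I)\setminus\{m\})\supseteq Borel(Bgens(I))=I$, so the smaller set is again a Borel generating set. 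In the reverse direction, you correctly identified the one genuinely delicate point --- a non-adjacent tail exchange $x_j\mapsto x_i$ is not directly forbidden by the hypotheses --- and your resolution, applying the variable exchange condition of $I$ to $m\,x_j/x_i$ with the pair $(i+1,j)$ to manufacture the adjacent witness $m\,x_{i+1}/x_i\in I$, is exactly right. Since the paper delegates this statement to a citation, your self-contained proof is a genuine addition rather than a variant of an existing argument.
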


Definitions 3.5, 3.6, 3.7 and Propositions 3.8 and 3.9 are due to \cite{francisco2011borel}. Note that definitions 3.5 and 3.6 are equivalent to definition 3.1. Also, in \cite{francisco2011borel}, they refer to strongly stable ideals as \textit{Borel ideals}. We have chosen to use the term strongly stable in this paper.

\begin{example}
For the ideal $I=(x^4,x^3y,x^2y^3,xy^4,y^7)$,
\begin{equation*}
Bgens(I)=\{x^3y,xy^4,y^7\}
\end{equation*}
\end{example}

\begin{definition}
A monomial ideal $I$ is \textit{symmetric} if it is closed under the action of the symmetric group $S_d$ on the variables.
\end{definition}

\begin{proposition}
A monomial ideal $I$ is symmetric iff $G(I)$ is closed under the action of the symmetric group $S_d$ on the variables.
\end{proposition}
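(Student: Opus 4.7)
The plan is to prove both directions by exploiting two elementary facts about the $S_d$-action: it permutes monomials bijectively, and divisibility of monomials is preserved, i.e.\ $m\mid m'$ iff $\pi(m)\mid\pi(m')$ for every $\pi\in S_d$.

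For the forward direction, I would assume $I$ is symmetric and take an arbitrary $m\in G(I)$ and $\pi\in S_d$. Since $\pi(m)\in I$, some minimal generator $m'\in G(I)$ divides $\pi(m)$. Applying $\pi^{-1}$ and using divisibility-preservation, $\pi^{-1}(m')\mid m$. Since $I$ is closed under $\pi^{-1}\in S_d$ as well, $\pi^{-1}(m')\in I$. By minimality of $m$ as a generator, this forces $\pi^{-1}(m')=m$, hence $\pi(m)=m'\in G(I)$. So $G(I)$ is closed under the $S_d$-action.

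For the reverse direction, suppose $G(I)$ is $S_d$-stable. Because $I$ is a monomial ideal, it suffices to show that for every monomial $\mathbf{x}^{\alpha}\in I$ and every $\pi\in S_d$, we have $\pi(\mathbf{x}^{\alpha})\in I$; the case of a general element then follows by extending linearly to each monomial component. If $\mathbf{x}^{\alpha}\in I$, some $g\in G(I)$ divides $\mathbf{x}^{\alpha}$, so $\pi(g)$ divides $\pi(\mathbf{x}^{\alpha})$. By hypothesis $\pi(g)\in G(I)\subset I$, hence $\pi(\mathbf{x}^{\alpha})\in I$.

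I do not expect a serious obstacle here; the only subtlety is remembering that $S_d$ acts by a group, so closure under $\pi$ automatically yields closure under $\pi^{-1}$, which is what makes the minimality argument in the forward direction go through cleanly. I would present the proof in roughly the order above, stating the bijectivity/divisibility-preservation observation once at the start and then applying it in each direction.
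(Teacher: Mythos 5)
Your proposal is correct and follows essentially the same route as the paper: the forward direction transports a witness of non-minimality back through $\pi^{-1}$ using the fact that the $S_d$-action preserves divisibility (the paper phrases this as a contradiction via $\pi(g)/x_j\in I$, you phrase it directly via a minimal generator dividing $\pi(m)$, but the idea is identical), and the reverse direction is the same divisibility argument as the paper's. No gaps.
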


\begin{proof}
($\Rightarrow$) Let $g\in G(I)$ and $\pi\in S_d$. Since $I$ is symmetric, $\pi(g)\in I$. To see that $\pi(g)\in G(I)$, suppose not. Then there exists $j$ so that $\frac{\pi(g)}{x_j}\in I$. It follows that $\pi^{-1}(\frac{\pi(g)}{x_j})=\frac{g}{x_{\pi^{-1}(j)}}$ so $g\notin G(I)$. Contradiction.

($\Leftarrow$) Let $u\in I$. Then there exists some $g\in G(I)$ so that $g\mid u$. It follows that $\pi(g)\mid\pi(u)$ for all $\pi\in S_d$, so $\pi(u)\in I$.
\end{proof}

Given a set of monomials $A=\{\textbf{x}^{\alpha_1},\dots,\textbf{x}^{\alpha_r}\}$, we can \textit{symmetrize} this set of monomials by letting $S_d$ act on the variables. Denote this set by 
\begin{equation*}
sym(A):=\{\textbf{x}^{\pi(\alpha_j)}\mid \textbf{x}^{\alpha_j}\in A;\pi\in S_d\}
\end{equation*}

When $A=\{\textbf{x}^\alpha\}$ consists of a single monomial, we refer to $sym(A)$ as the \textit{orbit} of $\textbf{x}^\alpha$.

\begin{definition}
A monomial $\textbf{x}^\alpha$ is a \textit{pure power} of $x_j$ if $\alpha=(a_1,\dots,a_j,\dots,a_d)$ with $a_i=0$ for $i\neq j$.
\end{definition}

\begin{definition}
An ideal $I\subset K[x_1,\dots,x_d]$ is \textit{Artinian} if the Krull dimension of $R/I$ is zero.
\end{definition}

Note that a monomial ideal is Artinian if and only if $I$ contains a pure power of $x_j$ for $1\leq j\leq d$. Denote the set of Artinian ideals with the largest degree of any pure power equal to $n$ by $\mathcal{A}_d(n)$. Denote the set of Artinian strongly stable ideals with pure power $x_d^n\in G(I)$ by 
\begin{equation*}
\mathcal{B}_d(n):=\{I\in\mathcal{A}_d(n)\mid \text{$I$ is strongly stable}\} 
\end{equation*}
Similarly, use
\begin{equation*}
\mathcal{T}_d(n):=\{I\in\mathcal{A}_d(n)\mid \text{$I$ is symmetric}\} 
\end{equation*}
to denote the set of Artinian symmetric monomial ideals with pure power $x_d^n\in G(I)$.

\begin{proposition}
Let $I\in\mathcal{B}_d(n)$ with pure power $x_d^n\in G(I)$. Then for every pure power $x_j^{k_j}\in G(I)$, we have $k_j\leq n$.
\end{proposition}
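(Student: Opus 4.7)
The plan is to use the strongly stable property of $I$ applied to the known pure power $x_d^n$ to produce the pure powers $x_j^n$ for each $j<d$, and then invoke the minimality of generators in $G(I)$.

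First I would observe that since $x_d^n\in G(I)\subseteq I$ and $I$ is strongly stable, the variable exchange condition from Definition 3.1 permits successive replacement of each factor of $x_d$ by $x_j$ for any $j<d$. Equivalently, using Definition 3.5, a single Borel move taking $i_1=\cdots=i_n=j$ and $j_1=\cdots=j_n=d$ applied to $x_d^n$ (which is divisible by $x_d$ to the $n$th power) produces
\begin{equation*}
x_d^n\cdot\frac{x_j^n}{x_d^n}=x_j^n\in I.
\end{equation*}
Thus for every index $j$ with $1\leq j<d$, the pure power $x_j^n$ lies in $I$.

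Next, suppose $x_j^{k_j}\in G(I)$ is a pure power generator. If $j=d$, then $k_d=n$ since $G(I)$ contains a unique pure power of $x_d$, and the bound holds trivially. If $j<d$ and $k_j>n$, then $x_j^n$ would be a proper divisor of $x_j^{k_j}$ that already lies in $I$, contradicting the minimality of $x_j^{k_j}$ as a generator. Therefore $k_j\leq n$ in all cases.

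There is no real obstacle here; the statement is a direct unwinding of the definition of strongly stable together with the minimality of $G(I)$. The only thing to be attentive to is whether one uses a single Borel move with $n$ simultaneous swaps (as above) or an induction on iterated single-variable exchanges — either version produces $x_j^n\in I$ immediately, after which the minimality argument closes the proof.
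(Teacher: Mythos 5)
Your proposal is correct and follows the same route as the paper: apply Borel moves (equivalently, iterated variable exchanges) to $x_d^n$ to obtain $x_j^n\in I$ for every $j$, then conclude from minimality of $G(I)$ that no pure power generator can have degree exceeding $n$. The paper's proof is just a terser statement of exactly this argument.
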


\begin{proof}
We can use Borel moves to get $x_j^n\in I$ for all $j$. So every pure power $x_j^{k_j}\in G(I)$ must satisfy $k_j\leq n$.
\end{proof}

\begin{proposition}
Let $I\in\mathcal{T}_d(n)$ with pure power $x_d^n\in G(I)$. Then for every pure power $x_j^{k_j}\in G(I)$, we have $k_j=n$.
\end{proposition}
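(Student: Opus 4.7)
The plan is to use the characterization of symmetric ideals provided by Proposition 3.12, which says that $G(I)$ itself is closed under the $S_d$-action, and then argue that a minimal generating set cannot contain two distinct pure powers of the same variable.

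First I would fix $j$ with $1\leq j\leq d$ and let $\pi\in S_d$ be the transposition swapping $j$ and $d$. Since $x_d^n\in G(I)$ and $G(I)$ is closed under the action of $S_d$ by Proposition 3.12, we immediately get $\pi(x_d^n)=x_j^n\in G(I)$.

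Next, suppose that $x_j^{k_j}\in G(I)$ for some exponent $k_j$. I want to conclude $k_j=n$. Both $x_j^n$ and $x_j^{k_j}$ lie in $G(I)$, and both are pure powers of the single variable $x_j$. If $k_j<n$, then $x_j^{k_j}\mid x_j^n$, contradicting minimality of $x_j^n$ in $G(I)$; if $k_j>n$, then $x_j^n\mid x_j^{k_j}$, contradicting minimality of $x_j^{k_j}$. Hence $k_j=n$.

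I do not anticipate any real obstacle: the entire argument rests on (i) Proposition 3.12, which transfers the symmetry of $I$ to the minimal generating set $G(I)$, and (ii) the elementary fact that the minimal monomial generating set of a monomial ideal is an antichain under divisibility, so it cannot contain two distinct pure powers of the same variable. The only thing worth writing carefully is the application of $\pi=(j\;d)$ to confirm $x_j^n\in G(I)$ before comparing it with $x_j^{k_j}$.
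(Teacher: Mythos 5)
Your proposal is correct and matches the paper's approach: the paper's proof simply states that the result follows from Proposition 3.12, and your argument fills in exactly the intended details (applying the transposition $(j\;d)$ to get $x_j^n\in G(I)$, then using the divisibility antichain property of $G(I)$ to force $k_j=n$).
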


\begin{proof}
This follows from Proposition 3.12.
\end{proof}

\section{Partitions Correspond to Artinian Monomial Ideals}

\begin{proposition}
If $I\in\mathcal{A}_d(n)$, then $P=\{\alpha\mid\textnormal{\textbf{x}}^\alpha\notin I\}\in\mathcal{P}_d(n)$.
\end{proposition}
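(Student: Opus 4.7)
The plan is to verify both halves of the claim separately: first that $P$ is a $d$-dimensional partition in the sense of Definition 2.1, and then that its largest coordinate is exactly $n-1$ so that it lies in $\mathcal{P}_d(n)$.

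For the downward-closure property, I would take $\alpha=(a_1,\dots,a_d)\in P$ with $a_j>0$ and show that $\alpha':=(a_1,\dots,a_j-1,\dots,a_d)\in P$. The key observation is that $\textbf{x}^{\alpha'}$ divides $\textbf{x}^\alpha$, so if $\textbf{x}^{\alpha'}$ belonged to $I$, then $\textbf{x}^\alpha=x_j\cdot\textbf{x}^{\alpha'}$ would too, contradicting $\alpha\in P$. Finiteness of $P$ follows from the Artinian hypothesis: for each $j$ there is a pure power $x_j^{k_j}\in I$, and any $\alpha\in P$ must satisfy $a_j<k_j$, so $P$ is contained in a finite box.

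For the largest-coordinate claim, I would use the fact that $I\in\mathcal{A}_d(n)$ means $n=\max_j k_j$ where $x_j^{k_j}$ is the minimal pure power of $x_j$ in $I$. For the upper bound, since every $k_j\leq n$, we have $x_j^n\in I$ for every $j$, so any $\alpha\in P$ satisfies $a_j\leq n-1$ coordinate-wise. For the lower bound, pick $j^*$ with $k_{j^*}=n$; then $x_{j^*}^{n-1}\notin I$, so the cell $(0,\dots,0,n-1,0,\dots,0)$ with $n-1$ in the $j^*$-th slot lies in $P$, realizing the value $n-1$ as the largest coordinate.

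This is essentially a definition-chase and I don't anticipate a real obstacle; the only mild care needed is to parse the definition of $\mathcal{A}_d(n)$ correctly (the maximum, not the minimum, of the exponents of pure powers in $G(I)$ equals $n$) so that both bounds on the largest coordinate align.
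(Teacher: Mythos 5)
Your proof is correct and follows essentially the same route as the paper's: downward closure via divisibility, finiteness from the Artinian hypothesis, and the two bounds on the largest coordinate from the pure powers. The only difference is cosmetic — you select a variable $j^*$ achieving the maximal pure-power degree $n$, whereas the paper assumes that variable is $x_d$ (as it is for the subclasses $\mathcal{B}_d(n)$ and $\mathcal{T}_d(n)$ it goes on to consider); your version is, if anything, slightly more careful about the definition of $\mathcal{A}_d(n)$.
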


\begin{proof}
First, we need to show that $P=\{\alpha\mid\textbf{x}^\alpha\notin I\}$ is a partition. Just notice that if $\alpha=(a_1,\dots,a_d)\in P$, then $\textbf{x}^\alpha\notin I$. It follows that $\textbf{x}^{(a_1,\dots,a_j-1,\dots,a_d)}\notin I$ so $(a_1,\dots,a_j-1,\dots,a_d)\in P$. Since $I$ is Artinian, $P$ is finite.

For every cell $\gamma=(c_1,\dots,c_d)\in P$, we have $c_i<n$, because $I$ contains a pure power of every variable with degree less than or equal to $n$. Since $x_d^n\in G(I)$, $(0,0,\dots,0,n-1)\in P$.
\end{proof}

\begin{proposition}
If $P\in\mathcal{P}_d(n)$, then $\{\textnormal{\textbf{x}}^\alpha\mid\alpha\notin P\}\in\mathcal{A}_d(n)$.
\end{proposition}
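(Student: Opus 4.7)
The plan is to verify in three steps that the set $I := \{\textbf{x}^\alpha \mid \alpha \notin P\}$ is a monomial ideal, is Artinian, and has the correct maximal pure-power degree, matching the definition of $\mathcal{A}_d(n)$.

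First, I would show $I$ is a monomial ideal. Since $I$ is already a set of monomials, it suffices to prove closure under multiplication: if $\textbf{x}^\alpha \in I$ and $\textbf{x}^\beta$ is any monomial, then $\textbf{x}^{\alpha+\beta} \in I$. The key tool is the contrapositive of the partition-defining implication in Definition 2.1: iterating it component by component yields the statement that if $\gamma \notin P$ and $\delta \in \NN^d$ with $\delta_j \ge \gamma_j$ for all $j$, then $\delta \notin P$. Applying this with $\gamma = \alpha$ and $\delta = \alpha + \beta$ gives $\alpha + \beta \notin P$, so $\textbf{x}^{\alpha+\beta} \in I$.

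Second, I would show $I$ is Artinian by exhibiting a pure power of each variable in $I$. Because $P$ is finite, for each $j \in \{1,\dots,d\}$ there is a smallest integer $k_j \ge 0$ such that the point with $k_j$ in the $j$th coordinate and zeros elsewhere fails to lie in $P$; then $x_j^{k_j} \in I$, so $I$ contains a pure power of every variable and is therefore Artinian.

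Third, I would pin down the maximal pure-power degree. Since $P \in \mathcal{P}_d(n)$, no cell has any coordinate exceeding $n-1$, so the axis point with $n$ in the $j$th slot never lies in $P$, giving $k_j \le n$ for every $j$. Conversely, by the definition of $\mathcal{P}_d(n)$ there is some cell $(a_1,\dots,a_d) \in P$ with some coordinate $a_j = n-1$; iterating the partition property to zero out the other coordinates shows that $(0,\dots,n-1,\dots,0) \in P$, hence $k_j = n$ for that $j$. Therefore the largest pure power in $G(I)$ has degree exactly $n$, which is the defining condition for $I \in \mathcal{A}_d(n)$.

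I do not anticipate a serious obstacle here; the only point requiring care is step one, where one must extract the right contrapositive/monotonicity consequence of Definition 2.1 and apply it to sums of exponent vectors. Everything else is a direct reading of the definitions of $\mathcal{P}_d(n)$ and $\mathcal{A}_d(n)$.
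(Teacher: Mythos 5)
Your proposal is correct and follows essentially the same route as the paper: the paper likewise notes that $I$ is a monomial ideal (asserting it as clear where you verify the upward-closure explicitly), observes that $x_j^n\in I$ for all $j$ because no coordinate of a cell exceeds $n-1$, and then uses a cell with some coordinate equal to $n-1$ to produce $(0,\dots,n-1,\dots,0)\in P$ and hence a pure power $x_j^n\in G(I)$. The extra detail you supply for the ideal-closure step is a harmless elaboration, not a different argument.
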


\begin{proof}
Let $I=\{\textbf{x}^\alpha\mid\alpha\notin P\}$. It is clear that $I$ is a monomial ideal. Since $(0,0,\dots,n,\dots,0,0)\notin P$, $x_j^n\in I$ for $1\leq j\leq d$.

There is some cell $\gamma=(c_1,\dots,c_j,\dots,c_d)\in P$ with $c_j=n-1$ for some $1\leq j\leq d$. By the definition of $d$-dimensional partition, $(0,0,\dots,n-1,\dots,0)\in P$, so $x_j^n\in G(I)$.
\end{proof}

The two propositions above allow us to define a map $\varphi:\mathcal{A}_d(n)\rightarrow\mathcal{P}_d(n)$ by
\begin{equation*}
\varphi(I)=\{\alpha\in\mathbb{N}^d\mid\textbf{x}^\alpha\notin I\}
\end{equation*}
with inverse $\varphi^{-1}:\mathcal{P}_d(n)\rightarrow\mathcal{A}_d(n)$ given by $\varphi^{-1}(P)=\{\textbf{x}^\alpha\mid\alpha\notin P\}$.

\begin{example}
Let $I=(x^4,x^2y,xy^2,y^3)\in\mathcal{A}_2(4)$. Then $\varphi(I)$ is the partition from Example 2.2 and is shown below with $\bullet$ used to represent generators of $I$.
\begin{center}
\begin{ytableau}
    \none & \none[\bullet] \\
    \none & & \none[\bullet]\\
    \none & & & \none[\bullet]\\
    \none & & & & & \none[\bullet]\\
\end{ytableau}
\end{center}
\end{example}

\begin{example}
Let $I=(x^3,x^2y,y^3,x^2z,xyz,y^2z,z^2)\in\mathcal{A}_3(3)$. Then $\varphi(I)$ is the plane partition from Example 2.3 shown below.
\begin{center}
\includegraphics[scale=0.3]{pp}
\end{center}
\end{example}

We will show that we can restrict $\varphi$ to a bijection between strongly stable ideals and strongly stable partitions and between symmetric monomial ideals and totally symmetric partitions.

\begin{proposition}
A monomial ideal $I$ is strongly stable iff $\varphi(I)$ is a strongly stable partition.
\end{proposition}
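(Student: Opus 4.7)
The plan is to prove both directions of the biconditional by directly unpacking the definitions, using the Hook vector of $\alpha\in P$ as a dictionary for exactly which monomials $\mathbf{x}^{\alpha+ke_j}$ do or do not lie in $I$.

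For the forward direction, I would assume $I$ is strongly stable and fix $\alpha\in P$ with Hook vector $(h_1,\dots,h_d)$. Suppose for contradiction that $h_i>h_j$ for some $i<j$. Maximality of $h_j$ gives $\mathbf{x}^{\alpha+(h_j+1)e_j}\in I$, while $h_i\geq h_j+1$ together with the partition axiom (applied to $\alpha+h_ie_i$) gives $\mathbf{x}^{\alpha+(h_j+1)e_i}\notin I$. Applying the variable-exchange property $h_j+1$ times to $\mathbf{x}^{\alpha+(h_j+1)e_j}$, each time swapping one $x_j$ for an $x_i$, produces $\mathbf{x}^{\alpha+(h_j+1)e_i}\in I$, which is the desired contradiction.

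For the converse, I would assume $P=\varphi(I)$ is a strongly stable partition and verify the variable-exchange condition on $I$. Given $m=\mathbf{x}^{\gamma}\in I$ with $x_j\mid m$ and $i<j$, I would argue by contrapositive: assume $m\cdot\tfrac{x_i}{x_j}=\mathbf{x}^{\gamma-e_j+e_i}\notin I$ and set $\alpha:=\gamma-e_j+e_i\in P$. Consider the cell $\beta:=\alpha-e_i\in P$; then $\beta+e_i=\alpha\in P$ forces the $i$th arm length of $\beta$ to be at least $1$, and the weakly increasing Hook vector hypothesis then forces the $j$th arm length of $\beta$ to also be at least $1$, so $\beta+e_j=\gamma\in P$, contradicting $m\in I$.

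The only technical point requiring care is that the $h_j+1$ iterated Borel moves in the forward direction are each individually valid --- that is, the current monomial still contains at least one $x_j$ when each swap is performed --- but this reduces to the trivial bound $a_j\geq 0$ on the $j$th coordinate of $\alpha$. The real content of the proof is simply the dictionary between arm lengths and ideal membership; once that is in place, both directions reduce to a short combinatorial check.
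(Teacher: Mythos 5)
Your proof is correct and follows essentially the same route as the paper: the forward direction is the paper's iterated variable-exchange argument (adding $h_j+1$ to the $j$th coordinate and swapping into the $i$th) recast as a contradiction, and the converse rests on the same dictionary between arm lengths and ideal membership. The only difference is that you verify the exchange condition for every monomial of $I$ directly via the cell $\gamma-e_j$, whereas the paper checks it only on minimal generators and invokes Proposition 3.2; both are valid, and yours is marginally more self-contained.
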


\begin{proof}
($\Rightarrow$) Let $\beta\in\varphi(I)$ so that $\textbf{x}^\beta\notin I$ and consider $H(\beta)=(h_1,\dots,h_d)$. Fix $1\leq i<j\leq d$. We have $(b_1,\dots,b_i,\dots,b_j+h_j+1,\dots,b_d)\notin\varphi(I)$, and we can use the fact that $I$ is strongly stable to get 
\begin{equation*}
\textbf{x}^{(b_1,\dots,b_i,\dots,b_j+h_j+1,\dots,b_d)}\in I\implies\textbf{x}^{(b_1,\dots,b_i+h_j+1,\dots,b_j,\dots,b_d)}\in I
\end{equation*}
so $(b_1,\dots,b_i+h_j+1,\dots,b_j,\dots,b_d)\notin P$. Therefore, $h_i<h_j+1\implies h_i\leq h_j$.

($\Leftarrow$) Let $\textbf{x}^\alpha\in G(I)$ such that $x_j\mid\textbf{x}^\alpha$. Then $\frac{\textbf{x}^\alpha}{x_j}=\textbf{x}^{(a_1,\dots,a_j-1,\dots,a_d)}\notin I$, so $\tilde{\alpha}=(a_1,\dots,a_j-1,\dots,a_d)\in\varphi(I)$. If $H(\tilde{\alpha})=(h_1,\dots,h_d)$, then $h_j=1$ because $\alpha\notin P$. Since $\varphi(I)$ is a strongly stable partition, $h_i\leq h_j=1$ for $i<j$. In particular, $(a_1,\dots,a_i+1,\dots,a_j-1,\dots,a_d)\notin I$. So $\textbf{x}^\alpha\frac{x_i}{x_j}\in I$.
\end{proof}

\begin{proposition}
A monomial ideal $I$ is symmetric iff $\varphi(I)$ is a totally symmetric partition.
\end{proposition}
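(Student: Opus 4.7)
The plan is to exploit the compatibility between the $S_d$ action on monomials and the $S_d$ action on exponent vectors: if $\pi \in S_d$ acts on $K[x_1,\dots,x_d]$ by $x_i \mapsto x_{\pi(i)}$, then $\pi(\textbf{x}^\alpha) = \textbf{x}^{\pi(\alpha)}$ where $\pi(\alpha)$ denotes the corresponding permutation of the coordinates of $\alpha$. With this identification, the statement becomes almost a tautology about the set complement $\varphi(I) = \{\alpha \mid \textbf{x}^\alpha \notin I\}$, since a subset of $\mathbb{N}^d$ is $S_d$-stable iff its complement in $\mathbb{N}^d$ is $S_d$-stable.

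For the forward direction, I would fix $\alpha \in \varphi(I)$ and $\pi \in S_d$ and argue by contrapositive: if $\textbf{x}^{\pi(\alpha)} = \pi(\textbf{x}^\alpha)$ were in $I$, then applying $\pi^{-1}$ (which preserves $I$ by symmetry) would give $\textbf{x}^\alpha \in I$, contradicting $\alpha \in \varphi(I)$. Hence $\pi(\alpha) \in \varphi(I)$, so $\varphi(I)$ is totally symmetric.

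For the reverse direction, I would take an arbitrary $\textbf{x}^\alpha \in I$ and $\pi \in S_d$, and show $\pi(\textbf{x}^\alpha) = \textbf{x}^{\pi(\alpha)} \in I$ by the analogous contrapositive: if $\textbf{x}^{\pi(\alpha)} \notin I$, then $\pi(\alpha) \in \varphi(I)$, and since $\varphi(I)$ is totally symmetric, applying $\pi^{-1}$ would yield $\alpha \in \varphi(I)$, i.e., $\textbf{x}^\alpha \notin I$, a contradiction. Alternatively, one could invoke Proposition 3.13 and reduce to checking $G(I)$ is $S_d$-stable, but the direct argument on all of $I$ is just as short.

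There is no real obstacle here; the main thing to get right is the bookkeeping that $\pi(\textbf{x}^\alpha) = \textbf{x}^{\pi(\alpha)}$ so that set-theoretic complementation inside $\mathbb{N}^d$ translates cleanly between the ideal and its partition. The only subtlety worth remarking on is that we must verify the complement we take is inside $\mathbb{N}^d$ (not all of $\mathbb{Z}^d$), which is fine because the $S_d$-action preserves $\mathbb{N}^d$, so stability of $\varphi(I) \subset \mathbb{N}^d$ under $S_d$ is equivalent to stability of its complement $\{\alpha \in \mathbb{N}^d \mid \textbf{x}^\alpha \in I\}$, matching the definition of a symmetric monomial ideal via the exponents of its monomials.
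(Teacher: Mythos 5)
Your proposal is correct and follows essentially the same route as the paper: both directions are handled by the complementation argument $\varphi(I)=\{\alpha\mid\textbf{x}^\alpha\notin I\}$ together with the compatibility $\pi(\textbf{x}^\alpha)=\textbf{x}^{\pi(\alpha)}$, applying $\pi^{-1}$ to pass between a set and its complement. The paper's proof is just a terser version of the same contrapositive bookkeeping you spell out.
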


\begin{proof}
($\Rightarrow$) Let $\beta\in\varphi(I)$ and let $\pi\in S_d$. Since $\textbf{x}^\beta\notin I$, $\textbf{x}^{\pi(\beta)}\notin I$, so $\pi(\beta)\in\varphi(I)$.\\

($\Leftarrow$) Let $\textbf{x}^\alpha\in I$. Then $\alpha\notin\varphi(I)\implies \pi(\alpha)\notin\varphi(I)$, so $\textbf{x}^{\pi(\alpha)}\in I$.
\end{proof}

In this section, we have shown that $\varphi\mid_{\mathcal{B}_d(n)}:\mathcal{B}_d(n)\rightarrow\tilde{\mathcal{B}}_d(n)$ and $\varphi\mid_{\mathcal{T}_d(n)}:\mathcal{T}_d(n)\rightarrow\tilde{\mathcal{T}}_d(n)$ are bijections. In the next section, we show a bijection $\mathcal{B}_d(n)\rightarrow\mathcal{T}_d(n)$.

\section{Bijection Between Monomial Ideals}

Denote the set of all monomials in $K[x_1,\dots,x_d]$ by $M_d$ and the set of monomials with weakly increasing exponent vector by $F_d$. Then use $\mathcal{F}_d(n)$ to denote the set of minimal subsets of $F_d$ which contain $x_d^n$ and no coordinate of any exponent vector exceeding $n$. Define a map $\psi:M_d\rightarrow F_d$ by
\begin{equation*}
\psi(\textbf{x}^{(a_1,a_2,\dots,a_d)})=\textbf{x}^{(a_1,a_1+a_2,a_1+a_2+a_3,\dots,a_1+a_2+\cdots+a_d)}
\end{equation*}
and notice that $\psi^{-1}:F_d\rightarrow M_d$ is given by
\begin{equation*}
\psi^{-1}(\textbf{x}^{(a_1,\dots,a_d)})=\textbf{x}^{(a_1,a_2-a_1,a_3-a_2,\dots,a_d-a_{d-1})}
\end{equation*}

\begin{proposition}
$\psi(m\frac{x_{q+1}}{x_q})=\frac{\psi(m)}{x_q}$
\end{proposition}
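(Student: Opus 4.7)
The plan is to verify the identity by a direct computation on exponent vectors. Write $m = \textbf{x}^{(a_1, \ldots, a_d)}$ with $a_q \geq 1$, which is the implicit hypothesis that makes $m \frac{x_{q+1}}{x_q}$ a monomial. Then the exponent vector of $m \frac{x_{q+1}}{x_q}$ is obtained from $(a_1, \ldots, a_d)$ by replacing $a_q$ with $a_q - 1$ and $a_{q+1}$ with $a_{q+1} + 1$, leaving all other coordinates fixed.

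Next I would apply $\psi$, whose $k$-th output coordinate is the partial sum $a_1 + a_2 + \cdots + a_k$ of the input. The key observation is a three-case bookkeeping: for $k < q$, the modified entries are not yet reached, so the partial sum is unchanged; for $k = q$, the partial sum becomes $(a_1 + \cdots + a_q) - 1$; and for $k \geq q+1$, the $-1$ in position $q$ and the $+1$ in position $q+1$ both contribute to the partial sum and cancel, so it is again unchanged. Consequently $\psi(m \frac{x_{q+1}}{x_q})$ agrees with $\psi(m)$ in every coordinate except the $q$-th, where it is smaller by exactly one. This exponent-vector identity is precisely the claim that $\psi(m \frac{x_{q+1}}{x_q}) = \frac{\psi(m)}{x_q}$.

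To make the right-hand side a legitimate monomial I also need the $q$-th coordinate of $\psi(m)$ to be at least $1$; but this coordinate equals $a_1 + \cdots + a_q \geq a_q \geq 1$, so the division is valid. There is no real obstacle here: the argument is just a three-case partial-sum computation, and the only thing worth flagging is the telescoping cancellation from index $q+1$ onward that keeps the later partial sums invariant.
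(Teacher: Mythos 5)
Your proof is correct and takes the same route as the paper: a direct computation on exponent vectors showing that the partial sums defining $\psi$ are unchanged except in position $q$, where they drop by one. If anything, your three-case telescoping bookkeeping is cleaner than the paper's displayed computation (whose middle line appears to garble $\psi$ with $\psi^{-1}$), and your remark that the $q$-th coordinate of $\psi(m)$ is at least $a_q\geq 1$, so the division by $x_q$ is legitimate, is a worthwhile detail the paper omits.
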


\begin{proof}
\begin{align*}
\psi(m\frac{x_{q+1}}{x_q})&=\psi(\textbf{x}^{(a_1,\dots,a_q-1,a_{q+1}+1,\dots,a_d)})\\
&=\textbf{x}^{(a_1,a_2-a_1,a_3-a_2,\dots,a_q-a_{q-1}-1,a_{q+1}-a_q,a_{q+2}-a_{q+1},\dots,a_d-a_{d-1})}\\
&=\frac{\psi(m)}{x_q}
\end{align*}
\end{proof}

For a subset of monomials $A=\{\textbf{x}^{\alpha_1},\dots,\textbf{x}^{\alpha_r}\}$, we will use $\psi(A):=\{\psi(\textbf{x}^{\alpha_1}),\dots,\psi(\textbf{x}^{\alpha_r})\}$. For an ideal $I$, we will use $\psi(I)$ to refer to the ideal generated by $\psi(G(I))$. 

\begin{proposition}
For a strongly stable ideal $I$, $m\in I\iff\psi(m)\in\psi(I)$.
\end{proposition}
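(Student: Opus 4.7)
The plan is to prove each direction separately; strong stability enters only in the reverse.

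For the forward direction, I would first note that $\psi$ is multiplicative on monomials: the exponent-vector map $\alpha\mapsto(a_1,a_1+a_2,\ldots,\sum_i a_i)$ is $\mathbb{Z}$-linear, so $\psi(m_1 m_2)=\psi(m_1)\psi(m_2)$. Given $m\in I$, pick $g\in G(I)$ with $g\mid m$ and write $m=gv$; then $\psi(m)=\psi(g)\psi(v)$ lies in $\psi(I)$ since $\psi(g)\in\psi(G(I))$. This half does not use strong stability.

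For the reverse direction, suppose $\psi(m)\in\psi(I)$. Because $\psi(I)$ is the monomial ideal generated by $\psi(G(I))$, some $g\in G(I)$ satisfies $\psi(g)\mid\psi(m)$. Writing $m=\mathbf{x}^\alpha$ and $g=\mathbf{x}^\beta$, this divisibility is equivalent to $u_k:=\sum_{i\leq k}(\alpha_i-\beta_i)\geq 0$ for all $k$. I would then induct on $\sum_k u_k=\deg\psi(m)-\deg\psi(g)$. The base case $\sum_k u_k=0$ forces $\psi(m)=\psi(g)$ and hence $m=g\in I$ by injectivity of $\psi$. For the inductive step, let $q$ be the smallest index with $u_q\geq 1$. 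Minimality forces $u_{q-1}=0$ (setting $u_0:=0$), so $\alpha_q=\beta_q+u_q\geq 1$. Define $m'$ by $\psi(m')=\psi(m)/x_q$: by Proposition 5.1, $m'=m\cdot x_{q+1}/x_q$ if $q<d$ (a genuine monomial since $\alpha_q\geq 1$), and $m'=m/x_d$ if $q=d$. The bound $u_q\geq 1$ means $\psi(g)$ still divides $\psi(m')$, so induction yields $m'\in I$. I recover $m$ from $m'$ by one ideal-preserving operation: $m=m'\cdot x_d$ in the case $q=d$ (multiplication by a variable), and $m=m'\cdot x_q/x_{q+1}$ in the case $q<d$ (a legitimate Borel move because $q<q+1$ and $x_{q+1}\mid m'$, its $x_{q+1}$-exponent being $\alpha_{q+1}+1\geq 1$). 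Strong stability of $I$ then closes the induction.

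The main subtlety will be simultaneously arranging $\psi(g)\mid\psi(m)/x_q$ (which needs $u_q\geq 1$) and having $m'$ be a genuine monomial of $K[x_1,\ldots,x_d]$ (which amounts to $\alpha_q\geq 1$ when $q<d$). Choosing $q$ to be the smallest index with $u_q\geq 1$ achieves both at once through the identity $\alpha_q=\beta_q+u_q$, after which the inductive step reduces to one application of strong stability or one multiplication by a variable.
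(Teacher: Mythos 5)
Your proof is correct. The forward direction is the same as the paper's (the paper phrases it as ``partial sums of componentwise inequalities are inequalities,'' you phrase it as multiplicativity of $\psi$; these are the same observation). The reverse direction, however, is organized differently. The paper works with the same quantities as your $u_k$ (it calls them $k_i$) but uses them to write down, in one shot, an explicit composite Borel move sending $g$ to a monomial $\tilde{g}=\mathbf{x}^{(a_1,\dots,a_{d-1},\,c_d-k_{d-1})}$ that divides $m$, and then verifies directly that each denominator in that product of exchanges actually divides $g$. You instead induct on $\deg\psi(m)-\deg\psi(g)=\sum_k u_k$, peeling a single variable off $\psi(m)$ at each step and converting it, via Proposition 5.1, into either one elementary exchange $x_{q+1}\mapsto x_q$ or one multiplication by $x_d$; choosing $q$ minimal with $u_q\geq 1$ is exactly what makes both $\psi(g)\mid\psi(m)/x_q$ and $\alpha_q\geq 1$ hold simultaneously, and that check is correct. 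What your version buys is that each step requires only a single application of the variable exchange condition, so there is no bookkeeping about whether $g$ is divisible by large powers of the denominators; it also puts Proposition 5.1 to work immediately, whereas the paper only uses it later in Proposition 5.3. What the paper's version buys is a non-inductive argument with an explicit witness $\tilde{g}\in G(I)$-multiple dividing $m$. Both are complete; yours is arguably the cleaner verification.
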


\begin{proof}
($\Rightarrow$) If $m\in I$, then there exists $g\in G(I)$ so that $g\mid m$. If $g=\textbf{x}^{(c_1,\dots,c_d)}$ and $m=\textbf{x}^{(a_1,\dots,a_d)}$, then $c_i\leq a_i$ for $1\leq i\leq d$. It follows that $c_1+\cdots+c_i\leq a_1+\cdots+a_i$ for all $i$. So $\psi(g)\mid\psi(m)$.\\
($\Leftarrow$) Assume $\psi(m)\in\psi(I)$ and let $k_i:=a_1+\cdots +a_i-(c_1+\cdots+ c_i)$ so that
\begin{equation*}
\textbf{x}^{(k_1,\dots,k_d)}\psi(g)=\psi(m)
\end{equation*}
Note that $k_{i}=a_{i}-c_{i}+k_{i-1}$ for $i=2,\dots,d$.
Then we have
\begin{align*}
\tilde{g}&:=g\frac{x_1^{k_1}}{x_2^{k_1}}\frac{x_2^{k_2}}{x_3^{k_2}}\cdots\frac{x_{d-1}^{k_{d-1}}}{x_d^{k_{d-1}}}\\
&=g\frac{x_1^{k_1-k_2}}{x_2^{k_1-k_2}}\frac{x_1^{k_2-k_3}}{x_3^{k_2-k_3}}\cdots\frac{x_1^{k_{d-2}-k_{d-1}}}{x_{d-1}^{k_{d-2}-k_{d-1}}}\frac{x_1^{k_{d-1}}}{x_d^{k_{d-1}}}\\
&=\textbf{x}^{(c_1+k_1,c_2+k_2-k_1,c_3+k_3-k_2,\dots,c_{d-1}+k_{d-1}-k_{d-2},c_d-k_{d-1})}\\
&=\textbf{x}^{(a_1,a_2,\dots,a_{d-1},c_d-k_{d-1})}
\end{align*}
since $c_i+k_i-k_{i-1}=c_i+(a_i-c_i+k_{i-1})-k_{i-1}=a_i$.
For the last coordinate, we have
\begin{align*}
c_d-k_{d-1}&=c_d-(a_1+\cdots+a_{d-1}-(c_1+\cdots+c_{d-1}))\\
&=c_1+\cdots+c_d - (a_1+\cdots+a_{d-1})\\
&\leq a_d
\end{align*}
We have shown that $\tilde{g}\mid m$.
Now, we claim that $\tilde{g}\in I$ because it is a Borel move from $g$. To see this, notice that $k_{i-1}-k_{i}=c_i-a_i$ for $i=2,\dots,d$ and $k_{d-1}=a_1+\cdots+a_{d-1}-(c_1+\cdots+c_{d-1})\leq c_d$. So $g$ is divisible by each of the denominators in the second line of $\tilde{g}$.
\end{proof}

\begin{proposition}
If $I$ is a strongly stable ideal, then $Bgens(I)=\psi^{-1}(min(\psi(G(I))))$.
\end{proposition}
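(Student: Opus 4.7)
The plan is to prove the stated equality by establishing the biconditional $m \in Bgens(I) \iff \psi(m) \in min(\psi(G(I)))$. The translation dictionary is Propositions 5.1 and 5.2: for any $q$ with $x_q \mid m$, one has $\psi(m)/x_q = \psi(m\frac{x_{q+1}}{x_q})$, so $m\frac{x_{q+1}}{x_q} \in I$ if and only if $\psi(m)/x_q \in \psi(I)$. Combined with Proposition 3.9, this recasts the defining condition of $Bgens(I)$ as: $m \in G(I)$, and $\psi(m)/x_q \notin \psi(I)$ for every $q$ with $x_q \mid m$. The target condition $\psi(m) \in min(\psi(G(I)))$ says that $m \in G(I)$ and no other element of $\psi(G(I))$ divides $\psi(m)$.

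The direction $\psi(m) \in min(\psi(G(I))) \Rightarrow m \in Bgens(I)$ goes by contrapositive and is the easy one. If some $q$ with $x_q \mid m$ makes $m\frac{x_{q+1}}{x_q} \in I$, then $\psi(m)/x_q \in \psi(I)$, so some $\psi(m') \in \psi(G(I))$ divides $\psi(m)/x_q$ and hence strictly divides $\psi(m)$; since $\psi$ is injective, $\psi(m') \neq \psi(m)$, and minimality fails.

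The reverse direction requires a short combinatorial descent, which I expect to be the main obstacle. Given strict divisibility $\psi(m') \mid \psi(m)$ with $m' \in G(I)$ and $m' \neq m$, I need to locate some $q$ with $x_q \mid m$ such that $\psi(m)/x_q$ remains divisible by $\psi(m')$; Propositions 5.1 and 5.2 will then give $m\frac{x_{q+1}}{x_q} \in I$ and hence $m \notin Bgens(I)$. Writing exponent vectors $\psi(m) = (b_1,\dots,b_d)$ and $\psi(m') = (b'_1,\dots,b'_d)$ with $b'_i \leq b_i$ and setting $b_0 = b'_0 = 0$, let $q^*$ be the smallest index where $b'_{q^*} < b_{q^*}$. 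Minimality of $q^*$ gives $b'_{q^*-1} = b_{q^*-1}$, and since $\psi(m') \in F_d$ is weakly increasing, $b_{q^*-1} = b'_{q^*-1} \leq b'_{q^*} < b_{q^*}$. Thus the $q^*$th exponent of $m$, namely $a_{q^*} = b_{q^*} - b_{q^*-1}$, is positive, so $x_{q^*} \mid m$, and $\psi(m)/x_{q^*}$ is still a multiple of $\psi(m')$ because we decremented precisely the first coordinate where $\psi(m)$ was strictly larger.

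One last subtlety is to verify $q^* \leq d - 1$, so that the reverse Borel move $m\frac{x_{q^*+1}}{x_{q^*}}$ is defined. But if $q^* = d$, then $\psi(m)$ and $\psi(m')$ would agree in the first $d-1$ coordinates, forcing $m = m' \cdot x_d^{b_d - b'_d}$ with $b_d > b'_d$, which contradicts $m \in G(I)$ alongside $m' \in G(I)$ and $m' \neq m$. This rules out $q^* = d$, and the descent then closes the nontrivial direction of the equivalence; everything else is formal bookkeeping through $\psi$.
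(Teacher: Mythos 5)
Your proof is correct and follows the same route as the paper's: both directions reduce, via Propositions 5.1 and 5.2, the Borel-generator criterion of Proposition 3.9 to minimality of $\psi(m)$ inside $\psi(G(I))$. The one place you go beyond the paper is the descent locating $q^*$ together with the check $q^*\le d-1$; the paper's own proof simply asserts $\frac{\psi(m)}{x_q}\notin\psi(I)$ for every $x_q\mid\psi(m)$ without verifying that the relevant witness $q$ satisfies $x_q\mid m$ and $q<d$, so your extra care fills a small genuine gap rather than changing the argument.
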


\begin{proof}
($\subseteq$) Let $m\in Bgens(I)\subset G(I)$. For any $x_q\mid\psi(m)$, $\frac{\psi(m)}{x_q}=\psi(m\frac{x_{q+1}}{x_q})\notin\psi(I)$ because $m\frac{x_{q+1}}{x_q}\notin I$. Therefore, $\psi(m)\in min(\psi(G(I)))$.

($\supseteq$) Let $m\in \psi^{-1}(min(\psi(G(I))))$. Then $\psi(m)$ is minimal, so $\frac{\psi(m)}{x_q}=\psi(m\frac{x_{q+1}}{x_q})\notin\psi(I)$. It follows that $m\frac{x_{q+1}}{x_q}\notin I$.
\end{proof}

Note that this gives an algorithm to compute $Bgens(I)$ given a generating set $G(I)$. As a result of Proposition 3.15, observe that for any $I\in\mathcal{B}_d(n)$, we have
\begin{equation*}
\textbf{x}^\alpha\in G(I)\implies deg(\alpha)\leq n.
\end{equation*}
It follows that every coordinate of $\psi(\textbf{x}^\alpha)$ is less than or equal to $n$. This observation combined with the previous proposition allows us to define a map
\begin{align*}
\Lambda:\mathcal{B}_d(n)&\rightarrow\mathcal{F}_d(n)\\
I&\mapsto \psi(Bgens(I)).
\end{align*}

\begin{proposition}
$\Lambda:\mathcal{B}_d(n)\rightarrow\mathcal{F}_d(n)$ is a bijection.
\end{proposition}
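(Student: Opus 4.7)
The plan is to construct an explicit two-sided inverse $\Lambda^{-1}:\mathcal{F}_d(n)\to\mathcal{B}_d(n)$ by setting $\Lambda^{-1}(A)=Borel(\psi^{-1}(A))$ and then verify both compositions are the identity. First I would check that $\Lambda^{-1}$ lands in $\mathcal{B}_d(n)$. The ideal $I:=Borel(\psi^{-1}(A))$ is strongly stable by construction, and since $\psi(x_d^n)=x_d^n\in A$ we have $x_d^n\in\psi^{-1}(A)\subseteq I$; applying Borel moves to $x_d^n$ produces $x_j^n\in I$ for every $j$, so $I$ is Artinian. To see $x_d^n\in G(I)$, I would observe that a Borel move only replaces a higher-index variable with a lower-index one, so no pure power of $x_d$ can arise from a Borel move acting nontrivially; the only pure powers of $x_d$ in $I$ come directly from $\psi^{-1}(A)$, and minimality of $A$ together with $x_d^n\in A$ forces $x_d^n$ to be the only such element.

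The composition $\Lambda^{-1}\circ\Lambda=\mathrm{id}$ is immediate from injectivity of $\psi$: $\Lambda^{-1}(\Lambda(I))=Borel(\psi^{-1}(\psi(Bgens(I))))=Borel(Bgens(I))=I$.

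The substantive direction $\Lambda\circ\Lambda^{-1}=\mathrm{id}$ I would reduce to the identity $\psi(I)=\langle A\rangle$ for $I=Borel(\psi^{-1}(A))$. One inclusion $\langle A\rangle\subseteq\psi(I)$ follows from Proposition 5.2, since $\psi^{-1}(A)\subseteq I$ gives $A\subseteq\psi(I)$. For the reverse inclusion, each $g\in G(I)$ is a Borel-move descendant of some $m\in\psi^{-1}(A)$, and iterating Proposition 5.1 shows that an adjacent Borel move $m\mapsto m\cdot\frac{x_q}{x_{q+1}}$ translates under $\psi$ into multiplication by $x_q$; composing these, the general move $m\mapsto m\frac{x_i}{x_j}$ sends $\psi(m)$ to $\psi(m)\cdot x_i x_{i+1}\cdots x_{j-1}$. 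Hence $\psi(g)$ is a monomial multiple of $\psi(m)=a\in A$, so $\psi(G(I))\subseteq\langle A\rangle$ and thus $\psi(I)\subseteq\langle A\rangle$. Minimality of $A$ as a subset of $F_d$ then gives $G(\psi(I))=A$, and Proposition 5.4 finishes the argument: $\Lambda(I)=\psi(Bgens(I))=min(\psi(G(I)))=G(\psi(I))=A$.

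The main obstacle is the inclusion $\psi(G(I))\subseteq\langle A\rangle$. Although $\psi$ preserves divisibility in the forward direction, it does not in reverse, so one cannot simply pull back the minimal generator structure. The workaround is to track Borel moves explicitly: under $\psi$ they become multiplications by interval products of consecutive variables, which makes the membership $\psi(g)\in\langle A\rangle$ transparent and lets the minimality of $A$ do the remaining work.
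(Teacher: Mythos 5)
Your proposal is correct, and it uses the same inverse map as the paper, namely $S\mapsto Borel(\psi^{-1}(S))$; the difference is one of completeness rather than of route. The paper's proof checks that this map lands in $\mathcal{B}_d(n)$ and that $\Lambda^{-1}\circ\Lambda=\mathrm{id}$, but it never verifies the other composition $\Lambda\circ\Lambda^{-1}=\mathrm{id}$ (equivalently, surjectivity of $\Lambda$), which is the substantive half. You supply exactly that missing half, and the key device you introduce --- that by iterating Proposition 5.1 a Borel move $m\mapsto m\frac{x_i}{x_j}$ corresponds under $\psi$ to multiplication by $x_i x_{i+1}\cdots x_{j-1}$, so that $\psi$ carries every minimal generator of $Borel(\psi^{-1}(A))$ to a multiple of some element of $A$ --- is not stated anywhere in the paper, though it is the natural companion to Proposition 5.1. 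Combined with Proposition 5.2 for the reverse inclusion, this yields $\psi(I)=\langle A\rangle$ and hence, via the minimality of $A$ and Proposition 5.3 (which you miscite as 5.4 in your last step), $\Lambda(\Lambda^{-1}(A))=A$. Two small points to tighten: your claim that every $g\in G(I)$ is a Borel-move descendant of some $m\in\psi^{-1}(A)$ deserves a sentence (one must check that the set of multiples of Borel moves of elements of $\psi^{-1}(A)$ is already closed under Borel moves, so that it equals $Borel(\psi^{-1}(A))$), and the citation should read Proposition 5.3. With those repairs your argument is a strictly more complete version of the paper's proof.
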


\begin{proof}
We claim that $\Lambda^{-1}:\mathcal{F}_d(n)\rightarrow\mathcal{B}_d(n)$ is given by $S\mapsto Borel(\psi^{-1}(S))$. Since $x_d^n\in S$, $\psi^{-1}(x_d^n)=x_d^n\in\psi^{-1}(S)$. There are no other pure powers of $x_d$ in $\psi^{-1}(S)$, so $x_d^n\in G(I)$. It follows that $Borel(\psi^{-1}(S))\in\mathcal{B}_d(n)$ and
\begin{align*}
(\Lambda^{-1}\circ\Lambda)(I)&=Borel(\psi^{-1}(\psi(Bgens(I))))\\
&=I.
\end{align*} 
\end{proof}

\begin{proposition}
If $I\subset K[x_1,\dots,x_d]$ is a symmetric monomial ideal, then $sym(G(I)\cap F_d)=G(I)$.
\end{proposition}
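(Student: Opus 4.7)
The plan is to use Proposition 3.13, which tells us that when $I$ is symmetric, $G(I)$ itself is closed under the action of $S_d$. The claim then reduces to the combinatorial observation that every monomial can be permuted to one whose exponent vector is weakly increasing.

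For the inclusion $\text{sym}(G(I) \cap F_d) \subseteq G(I)$, I would take an arbitrary element $\mathbf{x}^{\pi(\alpha)} \in \text{sym}(G(I) \cap F_d)$, where $\mathbf{x}^\alpha \in G(I) \cap F_d \subseteq G(I)$ and $\pi \in S_d$. Proposition 3.13 immediately gives $\mathbf{x}^{\pi(\alpha)} \in G(I)$.

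For the reverse inclusion $G(I) \subseteq \text{sym}(G(I) \cap F_d)$, I would take any $\mathbf{x}^\alpha \in G(I)$ and choose a permutation $\sigma \in S_d$ that sorts the exponent vector so that $\sigma(\alpha)$ is weakly increasing; such a $\sigma$ exists because the coordinates of $\alpha$ can always be arranged in non-decreasing order. Then $\mathbf{x}^{\sigma(\alpha)} \in F_d$ by construction, and Proposition 3.13 again ensures $\mathbf{x}^{\sigma(\alpha)} \in G(I)$, hence $\mathbf{x}^{\sigma(\alpha)} \in G(I) \cap F_d$. Applying $\sigma^{-1}$ recovers $\mathbf{x}^\alpha$ as an element of $\text{sym}(G(I) \cap F_d)$.

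There is essentially no obstacle here; the only care required is to be explicit about the $S_d$-action on exponent vectors so that "sorting via a permutation" is unambiguous, and then to invoke Proposition 3.13 in both directions. The entire argument is a short bookkeeping exercise once that proposition is in hand.
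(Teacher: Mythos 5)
Your argument is correct and is essentially the paper's own proof: both directions rest on the fact that for a symmetric ideal $I$ the generating set $G(I)$ is closed under the $S_d$-action (Proposition 3.12 in the paper, not 3.13), combined with sorting an exponent vector into weakly increasing order for the reverse inclusion. You simply spell out the sorting permutation more explicitly than the paper does; there is no substantive difference.
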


\begin{proof}
($\subseteq$) If $u\in sym(G(I)\cap F_d)$, then $u=\pi(m)$ for some $m\in G(I)\cap F_d$ and some $\pi\in S_d$. Since $G(I)$ is closed under operations of $S_d$, $u\in G(I)$.\\
($\supseteq$) If $u\in G(I)$, then there exists some $\pi\in S_d$ so that $\pi(u)\in G(I)\cap F_d$, so $u\in sym(G(I)\cap F_d)$.
\end{proof}

For an ideal $I\in\mathcal{T}_d(n)$, define a map $\Omega:\mathcal{F}_d(n)\rightarrow\mathcal{T}_d(n)$ by
\begin{equation*}
\Omega(A)=ideal(sym(A))
\end{equation*}

\begin{proposition}
$\Omega:\mathcal{F}_d(n)\rightarrow\mathcal{T}_d(n)$ is a bijection
\end{proposition}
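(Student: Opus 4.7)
The plan is to exhibit an explicit inverse $\Omega^{-1}:\mathcal{T}_d(n)\rightarrow\mathcal{F}_d(n)$ defined by $I\mapsto G(I)\cap F_d$, namely the minimal generators of $I$ whose exponent vectors already lie in $F_d$, and then verify that both maps land in the correct codomain and that the two composites are the identity.

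For the well-definedness of $\Omega^{-1}$ I need three things: $x_d^n\in G(I)\cap F_d$ holds by hypothesis; minimality as a subset of $F_d$ is inherited from minimality of $G(I)$; and no coordinate of any exponent vector of an element of $G(I)$ exceeds $n$, since by Proposition 3.13 every $x_j^n$ lies in $I$ and would divide any generator with a coordinate larger than $n$. For the well-definedness of $\Omega$, the ideal $ideal(sym(A))$ is symmetric by Proposition 3.11 (as $sym(A)$ is $S_d$-closed), it is Artinian since the orbit of $x_d^n$ contributes every $x_j^n$, and $x_d^n$ is a \emph{minimal} generator because any element of $sym(A)$ dividing $x_d^n$ must be a pure power $x_d^k$ with $k\leq n$, whose unique preimage in $A\cap F_d$ is again a pure power $x_d^k$, and minimality of $A$ forces $k=n$.

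The composition $\Omega\circ\Omega^{-1}$ is immediate from Proposition 5.6:
\begin{equation*}
\Omega(G(I)\cap F_d)=ideal(sym(G(I)\cap F_d))=ideal(G(I))=I.
\end{equation*}

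The main obstacle is the reverse composition, showing $G(ideal(sym(A)))\cap F_d=A$. The combinatorial heart is a sorting lemma provable by pigeonhole: if $(c_i),(a_i)$ are nonnegative integer tuples with $c_i\leq a_i$ for every $i$ and $(a_i)$ weakly increasing, then the weakly increasing rearrangement of $(c_i)$ also lies pointwise under $(a_i)$. Granting the lemma, the nontrivial $(\supseteq)$ inclusion proceeds as follows: suppose $\sigma(m')$ divides $m$ with $m,m'\in A$ and $\sigma\in S_d$, and take $(a_i)$ to be the exponent vector of $m$ and $(c_i)$ that of $\sigma(m')$; the lemma yields $m'\mid m$ (since sorting $(c_i)$ returns the exponent vector of $m'\in F_d$), so minimality of $A$ forces $m'=m$, and since $\sigma(m)$ then shares the degree of $m$ we conclude $\sigma(m)=m$, whence no strict divisor of $m$ lies in $sym(A)$ and $m\in G(\Omega(A))$. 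The reverse inclusion $(\subseteq)$ is easy: any $u\in G(\Omega(A))\cap F_d$ must belong to the generating set $sym(A)$, so $u=\pi(m)$ for some $m\in A$ and $\pi\in S_d$; since $u$ and $m$ both have weakly increasing exponent vectors and share the same multiset of exponents, they must agree.
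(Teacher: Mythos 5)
Your proposal is correct, and it uses the same candidate inverse as the paper, namely $\Omega^{-1}(I)=G(I)\cap F_d$. The difference is one of completeness: the paper's proof only verifies the composite $\Omega\circ\Omega^{-1}=\mathrm{id}$ (via the identity $sym(G(I)\cap F_d)=G(I)$ of Proposition 5.5, which you cite with a slightly off number), establishing surjectivity, and says nothing about the other composite $\Omega^{-1}\circ\Omega=\mathrm{id}$, i.e.\ $G(ideal(sym(A)))\cap F_d=A$. That direction is exactly where the only genuinely new idea is needed, and you supply it: the rearrangement lemma that if $c_i\leq a_i$ for all $i$ and $(a_i)$ is weakly increasing, then the weakly increasing sort of $(c_i)$ still lies pointwise under $(a_i)$ (correct by the pigeonhole count you indicate: for each $i$, the $i$ entries $c_1,\dots,c_i$ are all $\leq a_i$). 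From this you correctly deduce that if $\sigma(m')\mid m$ with $m,m'\in A$ then $m'\mid m$, so the antichain property of $A$ forces $m'=m$ and $\sigma(m)=m$, whence $m\in G(\Omega(A))$; the reverse inclusion is the easy observation that a monomial in $F_d$ in the orbit of $m\in F_d$ equals $m$. You also check the codomain conditions that the paper leaves implicit (coordinates bounded by $n$ via symmetry and the pure powers $x_j^n$, and $x_d^n$ remaining a minimal generator of $\Omega(A)$). In short, your argument fills a real gap in the paper's proof rather than merely reproducing it; the only blemishes are the two misnumbered citations (Proposition 5.5, not 5.6, for the symmetrization identity, and the pure-power statement for symmetric ideals is Proposition 3.16 in the paper's numbering).
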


\begin{proof}
We claim that the inverse $\Omega^{-1}:\mathcal{T}_d(n)\rightarrow\mathcal{F}_d(n)$ is given by $\Omega^{-1}(I)=G(I)\cap F_d$. Notice that for $I\in\mathcal{T}_d(n)$, we have $x_d^n\in G(I)$, so $x_d^n\in G(I)\cap F_d$. It follows that $\Omega^{-1}(I)\in\mathcal{F}_d(n)$ and
\begin{align*}
\Omega(\Omega^{-1}(I))&=\Omega(G(I)\cap F_d)\\
&=ideal(sym(G(I)\cap F_d))\\
&=I.
\end{align*}
\end{proof}

\begin{theorem}
There is a bijection between
$d$-dimensional strongly stable partitions and $d$-dimensional totally symmetric partitions which preserves the side length $n$ of the minimal $d$-dimensional box containing the partitions.
\end{theorem}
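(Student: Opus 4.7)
The plan is to assemble the bijection from four intermediate bijections that have already been established in the paper. Since each intermediate set is explicitly indexed by $n$ and each bijection preserves this parameter, preservation of side length falls out for free.

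Explicitly, I would take the desired map to be the composition
\begin{equation*}
\tilde{\mathcal{B}}_d(n) \xrightarrow{\varphi^{-1}} \mathcal{B}_d(n) \xrightarrow{\Lambda} \mathcal{F}_d(n) \xrightarrow{\Omega} \mathcal{T}_d(n) \xrightarrow{\varphi} \tilde{\mathcal{T}}_d(n).
\end{equation*}
The outer two maps are the restrictions of $\varphi$ recorded at the end of Section~4; the middle two are the bijections $\Lambda$ and $\Omega$ constructed in Section~5. A composition of bijections is a bijection, so the composite is a bijection from $\tilde{\mathcal{B}}_d(n)$ to $\tilde{\mathcal{T}}_d(n)$.

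The parameter $n$ is preserved at each step because every set in the chain is defined using the same $n$: for $\mathcal{B}_d(n)$ and $\mathcal{T}_d(n)$ the condition is that $x_d^n\in G(I)$; for $\mathcal{F}_d(n)$ it is that $x_d^n$ lie in the set, with no exponent exceeding $n$; for $\tilde{\mathcal{B}}_d(n)$ and $\tilde{\mathcal{T}}_d(n)$ it is that the largest coordinate of any cell equal $n-1$. No map in the chain alters this parameter, so the side length of the minimal bounding box is transported intact from one side of the bijection to the other.

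The only subtle point I would want to verify carefully is that $\varphi^{-1}$ really does land in $\mathcal{B}_d(n)$ (not merely $\mathcal{A}_d(n)$) when applied to a strongly stable partition $P\in\tilde{\mathcal{B}}_d(n)$; that is, that the distinguished pure power in $G(\varphi^{-1}(P))$ is $x_d^n$ rather than some $x_j^n$ with $j<d$. This follows from the Hook-vector condition applied at the origin: if some cell of $P$ has a coordinate equal to $n-1$ in slot $j$, then $(0,\dots,n-1,\dots,0)\in P$, so the $j$th hook length $h_j$ of the origin is at least $n-1$; the weakly increasing property then forces $h_d\geq h_j\geq n-1$, so $(0,\dots,0,n-1)\in P$ and hence $x_d^n\in G(\varphi^{-1}(P))$. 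With this observation in hand, the composite is well-defined and the theorem follows with essentially no further computation.
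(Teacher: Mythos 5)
Your proof is correct and takes essentially the same route as the paper: the paper's argument is precisely the commutative square obtained by composing $\varphi^{-1}$, $\Lambda$, $\Omega$, and $\varphi$, with the vertical bijections from Section~4 and the horizontal one from Propositions~5.4 and~5.6. Your extra verification that $\varphi^{-1}(P)$ has $x_d^n$ (and not merely some $x_j^n$) among its minimal generators is a detail the paper leaves implicit, and your hook-vector argument at the origin settles it correctly.
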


\begin{proof}
We have the diagram of bijections below. The vertical bijections were given in section 4 and the horizontal map $\Omega\circ\Lambda$ was shown to be a bijection by Propositions 5.4 and 5.6.

\[ \begin{tikzcd}
\mathcal{B}_d(n) \arrow{r}{\Omega\circ\Lambda} \arrow[swap]{d}{\varphi} & \mathcal{T}_d(n) \arrow{d}{\varphi} \\%
\tilde{\mathcal{B}}_d(n) \arrow{r}& \tilde{\mathcal{T}}_d(n)
\end{tikzcd}
\]
\end{proof}

\begin{example}
Consider the strongly stable ideal $I=(x^4,x^3y,x^2y^3,xy^4,y^7)\in\mathcal{B}_2(7)$ from Example 3.10. The corresponding partition $\varphi(I)\in\tilde{\mathcal{B}}_2(7)$ from Example 2.6 is shown on the left below. Minimal generators of $I$ which are not in $Bgens(I)$ are represented with $\bullet$. Elements of $Bgens(I)$ are represented by $*$. The middle diagram depicts $\psi(I)$ and illustrates that $\psi(Bgens(I))=min(\psi(G(I)))$. The diagram on the right is the corresponding totally symmetric partition.

\begin{center}
\includegraphics[scale=0.9]{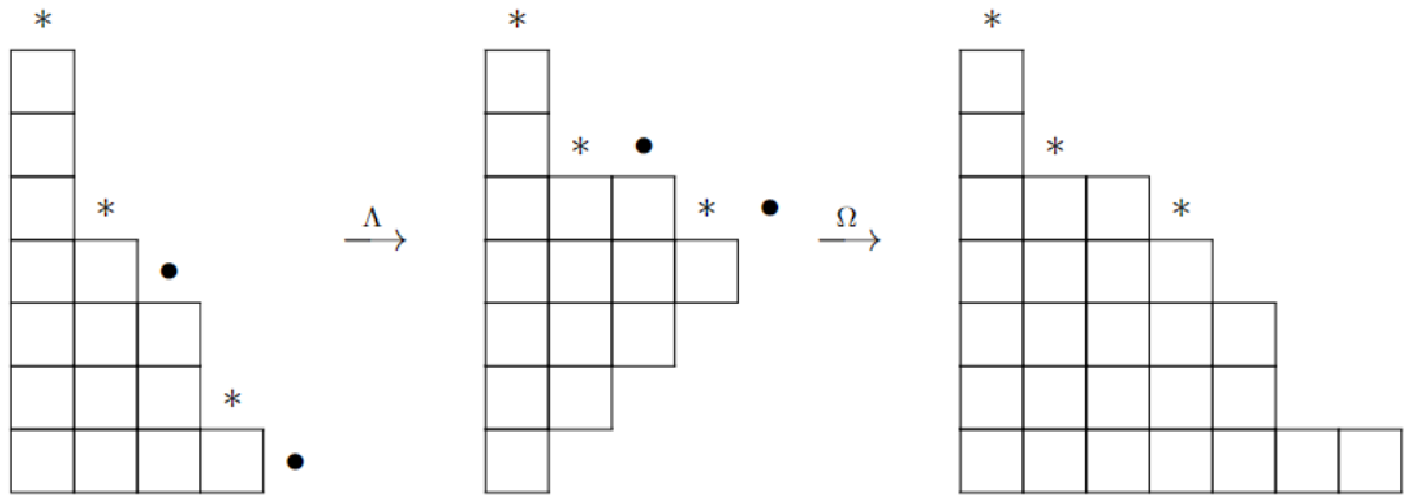}
\end{center}

\begin{remark}
For any $\textbf{x}^\alpha\notin I$, we have $\psi(\textbf{x}^\alpha)\notin\Lambda(I)$. Since $\psi(\textbf{x}^\alpha)$ is the unique representative of its orbit, the number of orbits of monomials not in $\Omega(\Lambda(I))$ is exactly the total number of monomials in the original ideal $I$. In partition language, this means that the number of cells in the strongly stable partition is exactly the number of orbits in the totally symmetric partition. In the example above, there are 15 cells in the strongly stable partition on the left which correspond to 15 orbits in the totally symmetric partition on the right.
\end{remark}

\end{example}

\begin{example}
Consider the strongly stable partition $P\in\tilde{\mathcal{B}}_3(4)$ from Example 2.7 shown below.
\begin{center}
\includegraphics[scale=0.4]{ss}
\end{center}
One can check that $I=(x^2,xy^2,y^3,xyz,y^2z,xz^2,yz^3,z^4)\in\mathcal{B}_3(4)$ is the corresponding strongly stable ideal ($\varphi(I)=P$) and that $Bgens(I)=\{x^2,xz^2,y^2z,z^4\}$. To compute the totally symmetric partition corresponding to $P$, we first compute
\begin{align*}
    \Omega(\Lambda(I))&=\Omega(\psi(Bgens(I)))\\
    &=\Omega(\{x^2y^2z^2,xyz^3,y^2z^3,z^4\})\\
    &=ideal(sym(\{x^2y^2z^2,xyz^3,y^2z^3,z^4\}))
\end{align*}
\end{example}
Finally, notice that $\varphi(ideal(sym(\{x^2y^2z^2,xyz^3,y^2z^3,z^4\})))\in\tilde{\mathcal{T}}_3(4)$ is the partition shown below. 
\begin{center}
\includegraphics[scale=0.5]{ts}
\end{center}

\section{Enumerations}
In this last section, we will explore the enumeration of strongly stable partitions (equivalently totally symmetric partitions) in boxes. Let
\begin{equation*}
B_d(n):=\sum_{k=0}^n|\tilde{\mathcal{B}}_d(k)|
\end{equation*}
denote the number of $d$-dimensional strongly stable partitions which fit in a box of side length $n$. Similarly, use
\begin{equation*}
T_d(n):=\sum_{k=0}^n|\tilde{\mathcal{T}}_d(k)|
\end{equation*}
By convention, we count the empty partition. As a result, we have $B_d(0)=T_d(0)=1$ since the empty partition fits inside a box of side length $0$. As a result of Theorem 1,
\begin{equation*}
B_d(n)=T_d(n)
\end{equation*}
for positive integers $d,n$.

In \cite{tspib}, Hawkes shows a bijection between $d$-dimensional totally symmetric partitions inside a box of side length $n$ and $(n-1)$-dimensional totally symmetric partitions inside a box of side length $d+1$. As a result, $T_d(n)=T_{n-1}(d+1)$ for $n\geq 2$. Combining this with the formula above, we have
\begin{equation*}
B_d(n)=B_{n-1}(d+1)
\end{equation*}
for positive integers $d,n$.

Lastly, we consider some formulae for $B_d(n)$ when $d$ is fixed. When $d=1$, partitions are simply nonnegative integers, and every $1$-dimensional partition is (trivially) strongly stable.
\begin{equation*}
B_1(n)=T_1(n)=n+1
\end{equation*}

The number of $2$-dimensional strongly stable partitions which fit inside a box of side length $n$ (strict integer partitions with largest part $n$) is given by
\begin{equation*}
B_2(n)=T_2(n)=2^n,
\end{equation*}
because for $1\leq i\leq n$, we have only the choice of whether or not to include $i$ in the integer partition. 

In \cite{stembridge1995enumeration}, Stembridge proved that the number of totally symmetric plane partitions which fit inside a box of side length $n$ is given by the product formula
\begin{equation*}
T_3(n)=\prod_{1\leq i\leq j\leq k\leq n}\frac{i+j+k-1}{i+j+k-2}
\end{equation*}
and so we have a formula for $B_3(n)$.

In fact, in the case $d=3$, an even stronger result has been shown for totally symmetric plane partitions. The $q$-TSPP formula 
\begin{equation*}
\sum_{P\in\tilde{\mathcal{T}}_3(n)}q^{\abs{P/S_3}}=\prod_{1\leq i\leq j\leq k\leq n}\frac{1-q^{i+j+k-1}}{1-q^{i+j+k-2}}
\end{equation*}
was shown to be the orbit-counting generating function for totally symmetric plane partitions fitting inside an $n\times n\times n$ box in \cite{koutschan2011proof}. Remark 5.8 implies that this same formula is a \textit{cell}-counting generating function for strongly stable plane partitions.
 
There is no known formula for $B_d(n)=T_d(n)$ for $d\geq 4$ \cite{ts_solid_count}.

\newpage
\printbibliography
\end{document}